\def \DG {\Delta \Gamma}
\author{Robert Lukot\!'ka, J\'{a}n Maz\'{a}k, Xuding Zhu}
\title{
Maximum $4$-degenerate subgraph of a planar graph\footnote{
The work of the first author was supported from the grant 7/TU/13 and from the APVV grants APVV-0223-10 and ESF-EC-0009-10 within the EUROCORES Programme EUROGIGA (project GReGAS) of the European Science Foundation.
The work of the second author leading to this invention has received
funding from the European Research Council under the European Union’s Seventh
Framework Programme (FP7/2007-2013)/ERC grant agreement no. 259385.
The third author acknowledges partial support from the research grants NSF No. 11171730 and ZJNSF No. Z6110786.}
}
\theoremstyle{definition}
\newtheorem{definition}{Definition}
\newtheorem{theorem}[definition]{Theorem}
\newtheorem{claim}[definition]{Claim}
\newtheorem{corollary}[definition]{Corollary}
\newtheorem{lema}[definition]{Lemma}
\newtheorem{conjecture}{Conjecture}[section]
\def\mc{\hbox{\rm mc}}
\def\ch{\hbox{\rm ch}}
\def\tc{\hbox{\rm tc}}
\def\nontr{{\bigsqcup}}
\def\K{{\cal K}}
\let\epsilon=\varepsilon
\begin{document}

\maketitle

\abstract{
A graph $G$ is \emph{$k$-degenerate} if it can be transformed into an empty graph by subsequent
removals of vertices of degree $k$ or less.
We prove that every connected planar graph with average degree $d \ge 2$  has
a $4$-degenerate induced subgraph containing at least $(38-d)/36$ of its vertices.
This shows that every planar graph of order $n$ has a $4$-degenerate induced
subgraph of order more than $8/9 \cdot n$.
We also consider a local variation of this problem and show that
in every planar graph with at least $7$ vertices, deleting a suitable vertex allows us
to subsequently remove at least $6$ more vertices of degree four or less.
}

\section{Degeneracy and choosability}

A graph $G$ is \emph{$k$-degenerate} if every subgraph of $G$ has a vertex of
degree $k$ or less. Equivalently, a graph is $k$-degenerate if we can delete
the whole graph by subsequently removing vertices of degree
at most $k$. The reverse of this sequence of removed vertices can be used to
colour (or list-colour) $G$ with $k+1$ colours in a greedy fashion. Graph
degeneracy is therefore a natural bound on both chromatic number and list chromatic number.
In certain problems, graph degeneracy even provides the best known bounds on choosability
\cite{barat}.

This article focuses on degeneracy of planar graphs. Every subgraph of a planar graph has a vertex of degree at most $5$ because it is also planar;
therefore, every planar graph is $5$-degenerate. For $k<5$, a planar graph need not to be $k$-degenerate.
  An  interesting question arises how large $k$-degenerate subgraphs in planar graphs can
be guaranteed. We discuss this question for particular values of $k$ in the following
paragraphs.

Let $G$ be a planar graph. To find a   maximum  induced $0$-degenerate subgraph, we need
to find a maximum independent set. According to the $4$-colour theorem, we can find an
independent set of order at least $1/4 \cdot |V(G)|$. This bound is tight since
$K_4$ has no two independent vertices.

To find a maximum induced $1$-degenerate subgraph, we need a large induced
forest in $G$. Borodin \cite{borodin} proved that every planar graph $G$
is acyclically $5$-colourable, that is, we can partition the vertices of $G$
into five classes such that the subgraph induced by the union of any two classes is acyclic (hence, a
forest). By taking two largest classes we can guarantee an induced forest of
order at least $2/5 \cdot |V(G)|$ in $G$. The Albertson-Berman conjecture
\cite{AB} asserts that every planar graph has an induced forest containing at least
half of its vertices. This conjecture is tight as $K_4$ has no induced forest of
order greater that $2$. Borodin and Glebov \cite{BG} proved that the
Albertson-Berman conjecture is true for planar graphs of girth at least $5$.

Let $G$ be a plane graph. The vertices that belong to the unbounded face induce
an outerplanar graph. Let us delete them. The vertices
that belong to the unbounded face again induce an outerplanar graph and we can repeat
the process. In this way we create a sequence of outerplanar layers such
that only vertices in neighbouring layers can be adjacent.
If we take every second layer, the vertices from these layers induce a disjoint
union of outerplanar graphs. Since outerplanar graphs are $2$-degenerate
(every outerplanar graph contains a vertex of degree at most $2$, see
\cite{LW}), we can partition the vertices of $G$ into two sets such that each
set induces a $2$-degenerate graph. The larger of  these two sets has at
least $1/2 \cdot |V(G)|$ vertices. On the other hand, in the octahedron we can
take at most $4$ vertices into an induced $2$-degenerate subgraph, so the
maximum $2$-degenerate subgraph has order $2/3 \cdot |V(G)|$.

Degeneracy $3$ was studied by Oum and Zhu \cite{qsz} who were
interested in the order of a maximum $4$-choosable induced subgraph of a planar
graph. They showed that every planar graph has an induced $3$-degenerate
subgraph of order at least $5/7 \cdot |V(G)|$. For the upper bound,
the best we are aware of is that both octahedron and
icosahedron contain no induced $3$-degenerate subgraph of order greater than
$5/6 \cdot |V(G)|$.

To the authors' knowledge, there are no published results
concerning maximum $4$-degenerate induced subgraphs of planar
graphs. A likely reason is that such bounds are not interesting for
list-colouring applications: Thomassen proved that every planar graph is
$5$-choosable \cite{tomassen}.

The problem of  maximum degenerate subgraphs was also studied for general graphs
by Alon, Kahn, and Seymour \cite{aks}.
They precisely determined how large $k$-degenerate induced subgraph one can
guarantee depending only on the degree sequence of $G$.

This paper focuses on degeneracy $4$.
We define two operations for vertex removal: deletion and collection.
To \emph{delete} a vertex $v$, we remove $v$ and its incident edges from the
graph.
To \emph{collect} a vertex $v$ is the same as to delete $v$, but to be able to
collect $v$ we require $v$ to be of degree at most $4$.
Although the definitions are very similar, for our purpose there is a clear
difference: we want to collect as many vertices as possible and delete as few
as possible. The collected vertices induce a $4$-degenerate
subgraph whose order we are trying to maximize. We say we can \emph{collect a
set} $S$ of vertices if there exists a sequence in which the vertices of $S$ can
be collected. Vertices that are deleted or collected are collectively called
\emph{removed}. Note that a graph $G$ is $4$-degenerate if and only if we can
collect all its vertices.

\medskip

The main results of this paper are the following two theorems.
\begin{theorem}\label{mt}
Every connected planar graph with average degree $d \ge 2$   has
a $4$-degenerate induced subgraph containing at least $(38-d)/36$ of its vertices.
\end{theorem}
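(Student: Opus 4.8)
The plan is to rephrase the statement as a bound on the number of vertices one must \emph{delete} rather than \emph{collect}, and then to argue by induction. Since $d=2|E(G)|/|V(G)|$, a $4$-degenerate induced subgraph of $G$ on at least $(38-d)/36$ of its vertices is precisely a set $D$ with $|D|\le(|E(G)|-|V(G)|)/18$ such that $G-D$ is $4$-degenerate, i.e.\ all of $G-D$ can be collected; so it suffices to show that every connected planar graph $G$ with $|E(G)|\ge|V(G)|$ can be made $4$-degenerate by deleting at most $(|E(G)|-|V(G)|)/18$ vertices. A useful first observation is that a connected planar graph $G$ with $|E(G)|-|V(G)|\le 17$ is already $4$-degenerate: otherwise its $5$-core would be non-empty, but Euler's formula forces every planar graph of minimum degree at least $5$ to have at least $12$ vertices and hence $|E|-|V|\ge 18$, while passing to the $5$-core never increases $|E|-|V|+(\text{number of components})$. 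This settles every instance whose deletion budget rounds down to $0$.

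For the general case I would induct on $|V(G)|$, proving the statement for all planar graphs with the budget counted componentwise (so sparse components, being $4$-degenerate, contribute nothing). If $G$ has a vertex $v$ of degree at most $4$, collect $v$ and apply the induction hypothesis to $G-v$: collecting a vertex of degree at least $1$ never increases $|E|-|V|$, so the budget is respected, and the only subtlety---that $G-v$ may disconnect---is harmless, since every connected component has at least as many edges as vertices minus one, so at most $\deg(v)-1$ of the components of $G-v$ are trees, which is exactly what the arithmetic requires. Hence we may assume that $G$ has minimum degree at least $5$, and in particular that $|E(G)|-|V(G)|\ge 18$.

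The minimum-degree-$\ge 5$ case is the crux. The goal is to locate a \emph{reducible configuration}: a connected vertex set $R$ that can be removed from $G$ by deleting a single vertex of $R$ and then collecting the rest of $R$ in a suitable order, in such a way that removing $R$ decreases $|E|-|V|$ (edges from $R$ to the rest of $G$ included) by at least $18$; then one deletion plus the inductive bound for $G-R$ stays within budget. A clean sufficient condition is that the number of edges of $G$ meeting $R$ be at least $|R|+18$, and since $R$ is planar while $G$ has minimum degree at least $5$,
\[
|E_G(R)|=\sum_{v\in R}\deg_G(v)-|E(R)|\ \ge\ 5|R|-(3|R|-6)\ =\ 2|R|+6,
\]
which already exceeds $|R|+18$ once $|R|\ge 12$. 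So it is enough to show that every planar graph of minimum degree at least $5$ contains a vertex whose deletion sets off a collection cascade absorbing at least $11$ further vertices, or else to handle directly the highly structured graphs in which no such vertex exists. Establishing this is the real work: I would run a discharging argument, assigning each vertex the charge $\deg(v)-6$ (total $-12$ by Euler), pushing charge from high-degree vertices toward the degree-$5$ vertices, reading off a short list of unavoidable local configurations, and checking for each of them that the required set $R$ can be built. The main obstacle is precisely this verification; it is delicate because the constant $18$ is attained by triangulations, so there is essentially no slack in the configuration analysis.
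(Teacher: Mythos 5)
Your reformulation is correct and in fact reproduces the paper's induction invariant: the componentwise budget $\sum_i \max\bigl(0,(|E_i|-|V_i|)/18\bigr)$ is exactly the paper's
$\Gamma(G)=\tfrac1{12}|V|+\tfrac1{36}\Phi(G)+\tfrac1{18}\tc(G)$,
where the tree-component term $\tc$ is what converts $(|E_i|-|V_i|)/18=-1/18$ into $0$ for tree components. Your base case (sparse connected planar graphs are $4$-degenerate, via the $5$-core) and the degree-$\le 4$ reduction (the number of new tree components is at most $\deg(v)-1$, so the budget does not increase) are both correct and match the paper's Lemmas~\ref{lemma:connected} and~\ref{lemma:mindegree}.

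The genuine gap is in the minimum-degree-$5$ step, and it is twofold. First, your stated sufficient condition for a reducible configuration, $|E_G(R)|-|R|\ge 18$, is not actually sufficient: if removing $R$ leaves $t$ tree components in $G-R$, the budget only drops by $\bigl(|E_G(R)|-|R|-t\bigr)/18$, so you need $|E_G(R)|-|R|-\tc(G-R)\ge 18$. Controlling $\tc(G-R)$ is a real issue that the paper handles at some length (Lemma~\ref{lemacomp}($\tc$) and the star-marked subcases in Configuration~10 of Lemma~\ref{lema5}); the argument there relies on the absence of bad cut-sets, a notion you never introduce. Second, and more importantly, you leave the key lemma unproved and your quantitative target is substantially harder than what the paper actually establishes. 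The lower bound $|E_G(R)|\ge 2|R|+6$, which forces $|R|\ge 12$, comes from assuming only $\delta(G)\ge 5$ and planarity of $G[R]$. The paper gets by with $|R|$ as small as $7$--$9$ because its configurations contain a central $7^+$- or $8^+$-vertex (raising $\Sigma_d$) and because the no-bad-cut analysis gives sharp upper bounds on $\Sigma_e=|E(G[R])|$, far below $3|R|-6$. Guaranteeing a collection cascade of $\ge 11$ vertices after one deletion in every planar graph of minimum degree $5$ is a much stronger claim than the paper's Theorem~\ref{smt} (which only guarantees $6$), and nothing in your sketch of a vertex-only discharging with charges $\deg(v)-6$ indicates how to extract such a large configuration; the paper's discharging uses face charges $2(3-\ell)$, a distance-discharging step, and a sixteen-row table of per-vertex-type maximum charges, and even with all that machinery the unavoidable configurations it produces are of size well below $12$.
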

\begin{theorem}\label{smt}
In every planar graph with at least $7$ vertices we can delete a vertex in such a
way that we can collect at least $6$ vertices.
\end{theorem}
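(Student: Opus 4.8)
The plan is to argue by contradiction, taking a counterexample $G$ with the fewest vertices, and I would begin by clearing the routine reductions. If $G$ is disconnected and some component $K$ has at least $7$ vertices, then $K$ is a smaller counterexample, since a collection inside $K-u$ is a collection in $G-u$; hence every component of $G$ has at most $6$ vertices, and a planar graph on at most $6$ vertices is $4$-degenerate (it has a vertex of degree at most $4$, and after its removal every degree is at most $4$), so we may delete any vertex of $G$ and then collect all $n-1\ge 6$ remaining ones -- a contradiction. So $G$ is connected, and the same remark settles $n=7$: deleting any vertex leaves a $6$-vertex planar, hence $4$-degenerate, graph, all of whose vertices can be collected. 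Since collecting is monotone under deletion (removing one more vertex only lowers degrees), if $G$ is $4$-degenerate we delete any vertex and collect the rest; otherwise $\core(G)$ is nonempty, is planar with minimum degree at least $5$ (hence has at least $12$ vertices), and collecting $V(G)\setminus\core(G)$ leaves exactly $\core(G)$. Thus, after deleting a vertex $u\in\core(G)$ and then collecting $V(G)\setminus\core(G)$ (still a valid collection, by monotonicity), the theorem reduces to the lemma: \emph{every planar graph $H$ with $\delta(H)\ge 5$ has a vertex $u$ such that at least $6$ vertices can be collected from $H-u$.}

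For the lemma, adding an edge inside a face keeps $H$ planar with $\delta\ge 5$ and only makes collecting harder, so I may assume $H$ is a (connected) triangulation; by Euler's formula it then has at least $12$ vertices of degree exactly $5$. If some vertex $u$ has at least $6$ neighbours of degree $5$, deleting $u$ lowers each of them to degree at most $4$, and collecting them one after another keeps the remaining ones of degree at most $4$, so we collect $6$. Otherwise every vertex has at most $5$ neighbours of degree $5$, and I would run a ``collapse'' rooted at a degree-$5$ vertex. Take a degree-$5$ vertex $w$ whose five neighbours form a cycle $x_1x_2x_3x_4x_5$ (the link of $w$) and delete $u=x_1$. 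Then $w$ has degree $4$; collect it. Now $x_2$ and $x_5$ have each lost a second incident edge, so each becomes collectible provided its degree in $H$ is at most $6$; collecting them lets $x_3$ and $x_4$ lose a second edge, and so on; once the link has been consumed, the vertex lying in the triangle on the far side of an edge $x_ix_{i+1}$ has lost two edges and is itself collectible if of degree at most $6$. Careful bookkeeping shows the collapse reaches six collected vertices unless some vertex met on the way -- a neighbour of $w$, or one vertex of the second ring -- has degree at least $7$.

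The main obstacle is precisely this last alternative: every degree-$5$ vertex having its whole link (and the relevant second ring) of degree at least $7$. To exclude this globally I would use the freedom still at hand -- the five rotations for which neighbour of $w$ to delete, the at least $12$ available degree-$5$ vertices, and variants of the collapse that begin at two adjacent degree-$5$ vertices, or at a degree-$5$ vertex with another degree-$5$ vertex at distance two -- together with a discharging argument: give each vertex $v$ the charge $\deg(v)-6$, so the total is $-12$, and move charge from vertices of degree at least $7$ to nearby degree-$5$ vertices, aiming to show that if no degree-$5$ vertex admitted a collapse reaching $6$, then every vertex could end with nonnegative charge, contradicting the total $-12$. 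Compiling the list of ``reducible'' neighbourhood patterns around a degree-$5$ vertex and verifying that the discharging then balances is where the real work lies.

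Finally, if Theorem~\ref{mt} is established in the stronger form that the $4$-degenerate induced subgraph it produces is actually a collectible set, then Theorem~\ref{smt} is immediate: for every planar graph on $n\ge 7$ vertices one gets (since $(38-d)/36>8/9$ for planar $G$) a collectible set $S$ with $|S|>8n/9>6$, and deleting a vertex outside $S$ -- or any vertex if $S=V(G)$, in which case $G$ is $4$-degenerate -- leaves at least $6$ vertices of $S$ still collectible.
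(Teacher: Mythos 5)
Your reductions are sound and efficiently packaged: connectivity, the small cases, the pass to the $5$-core, and the pass to a triangulation all go through, and the resulting lemma---every planar graph of minimum degree $\ge 5$ has a vertex whose deletion lets one collect $6$ vertices---is indeed the heart of the matter; the paper reaches the same place via Lemmas~\ref{lemma:connected} and~\ref{lemma:mindegree}. But the proof of that lemma is missing. You describe a collapse rooted at a degree-$5$ vertex, correctly note that it stalls precisely when the link or second ring contains $7^+$-vertices, and then say that ruling this out globally by discharging is ``where the real work lies''---and stop. That work is most of the paper from Section~\ref{sec:procedure} onwards: a three-stage discharging rule (to faces, at distance two, and according to the type table, Table~\ref{figt}), a treatment of triangle-cuts and chordless quadrilateral-cuts so the reductions do not collide with small separators, and the verification of $14$ neighbourhood configurations of a $5$-vertex plus several more around $6^+$-vertices. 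Without it, your proposal is a plan, not a proof.

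The closing fallback also does not close this gap. You condition on a strengthened Theorem~\ref{mt} in which the large $4$-degenerate induced subgraph is already a ``collectible set''; this is strictly stronger than what the paper proves. Theorem~\ref{thm:main2}(2) guarantees only that \emph{after deleting} the small set $S$ the remaining vertices can be collected; the complement of $S$ need not be collectible in $G$ itself. In the icosahedron every vertex has degree $5$, so the $11$-vertex $4$-degenerate set obtained by removing one vertex is not collectible until that removal has been made. Thus ``delete a vertex outside the large set and then collect it'' only works when $|S|\le 1$; for larger $S$ one would need to delete more than one blocker. The paper instead derives Theorem~\ref{smt} from Theorem~\ref{thm:main2}(1), whose alternative ``delete a vertex and collect at least $6$'' is exactly what is wanted and comes out of the very same configuration analysis your sketch defers.
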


Since the average degree of a planar graph is less than $6$, Theorem~\ref{mt} has the following corollary.
\begin{corollary}\label{cmt}
In every planar graph $G$ we can delete less than $1/9$ of its vertices
in such a way that we can collect all the remaining ones.
The collected vertices induce a $4$-degenerate subgraph of $G$ containg more than $8/9$ of its vertices.
\end{corollary}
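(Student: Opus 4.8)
The plan is to derive Corollary~\ref{cmt} directly from Theorem~\ref{mt}. First I would recast the statement: since, as noted earlier in this section, a graph is $4$-degenerate exactly when all of its vertices can be collected, it suffices to prove that every planar graph $G$ contains a $4$-degenerate induced subgraph on more than $\tfrac{8}{9}|V(G)|$ vertices; deleting the remaining (fewer than $\tfrac19|V(G)|$) vertices then leaves precisely that $4$-degenerate subgraph, whose vertices can all be collected.

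To produce such a subgraph I would reduce to the connected case. Write $G$ as the disjoint union of its components $C_1,\dots,C_t$, with $n_i = |V(C_i)|$, and treat each component separately, either invoking Theorem~\ref{mt} or observing $4$-degeneracy outright. If $C_i$ is acyclic then it is a tree, hence $1$-degenerate and in particular $4$-degenerate, so one takes $H_i = C_i$. Otherwise $C_i$ contains a cycle, so $|E(C_i)| \ge n_i$ and its average degree is $d_i = 2|E(C_i)|/n_i \ge 2$, while planarity on $n_i \ge 3$ vertices gives $|E(C_i)| \le 3n_i - 6$ and hence $d_i \le 6 - 12/n_i < 6$; so Theorem~\ref{mt} applies and yields an induced $4$-degenerate $H_i \subseteq C_i$ with $|V(H_i)| \ge \tfrac{38-d_i}{36}\,n_i > \tfrac{38-6}{36}\,n_i = \tfrac{8}{9}\,n_i$. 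In either case $|V(H_i)| > \tfrac89 n_i$. Finally, the union $\bigcup_i V(H_i)$ induces $\bigsqcup_i H_i$, a disjoint union of $4$-degenerate graphs and therefore itself $4$-degenerate, on more than $\sum_i \tfrac89 n_i = \tfrac89|V(G)|$ vertices, which is exactly the required subgraph.

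The whole argument is a bookkeeping reduction, so there is no real obstacle beyond Theorem~\ref{mt} itself; the one point that needs attention is that Theorem~\ref{mt} carries two hypotheses — connectedness and average degree at least $2$ — neither of which a general planar graph need satisfy globally. Both are dealt with componentwise: disconnectedness is removed by splitting into components, and the components excluded by the degree condition are precisely the trees, which are trivially $4$-degenerate; the inequality $d<6$ for (components of) planar graphs then converts the component-dependent fraction $\tfrac{38-d}{36}$ coming from Theorem~\ref{mt} into the uniform bound $\tfrac89$.
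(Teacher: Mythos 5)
Your proof is correct and follows essentially the same route the paper takes: the paper's derivation of Corollary~\ref{cmt} is just the one-line observation that every planar graph has average degree $d<6$, so $(38-d)/36 > 8/9$. You have merely spelled out the routine bookkeeping the paper leaves implicit, namely that Theorem~\ref{mt} is stated only for connected graphs with $d\ge 2$, which you handle by splitting into components and disposing of tree components directly (they are already $1$-degenerate); this is the careful version of the paper's argument, not a different one.
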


These results are probably not the best possible.
In the icosahedron, we need to delete one vertex out of twelve
to be able to collect the remaining eleven. We believe that this is the worst possible case.

\begin{conjecture}
In every planar graph $G$ we can delete at most $1/12$ of its vertices
in such a way that we can collect all the remaining ones.
\end{conjecture}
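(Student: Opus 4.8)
The plan is to attack the conjecture by reducing to a dense ``core'' and then running a sharp discharging argument on a maximum $4$-degenerate induced subgraph. First I would observe that a vertex of degree at most $4$ can always be collected last: if $G'$ is obtained from $G$ by deleting a set $D$ with $|D|\le |V(G')|/12$ (for some subgraph $G'$ of $G$) and collecting everything else, then the same set $D$ works for $G$, with any degree-$\le 4$ vertex of $G$ appended to the end of the collection order. Iterating this, we may pass to the subgraph $G_0$ obtained by repeatedly collecting vertices of degree at most $4$; either $G_0$ is empty (so $G$ is $4$-degenerate and we are done) or $\delta(G_0)\ge 5$. By Euler's formula $\sum_v (6-d(v)) = 12$ on any triangulation, so after triangulating $G_0$ (which only makes the task harder, since a $4$-degenerate induced subgraph of the triangulation is still $4$-degenerate in $G_0$, hence in $G$) we are left with a plane triangulation of minimum degree $\ge 5$ in which almost all vertices have degree exactly $5$ — and the icosahedron, where deleting one vertex lets us collect the other eleven, is precisely the rigid extremal configuration the argument must accommodate with no slack.

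Next, let $S$ be an induced $4$-degenerate subgraph of (the triangulated) $G_0$ of maximum order and put $D = V(G_0)\setminus S$; the conjecture becomes $11|D|\le |S|$, equivalently $|D|\le |V(G_0)|/12$. From maximality of $S$ one extracts local structure: taking a degeneracy ordering of $G_0[S]$ and prepending any $u\in D$ shows $|N(u)\cap S|\ge 5$ for every $u\in D$. Iterating the same exchange idea — re-adding $u$ after temporarily removing a low-degeneracy vertex of $S$, or trying to add a $D$-vertex together with one of its $D$-neighbours — should rule out ``cheaply surrounded'' $D$-vertices: for instance a degree-$5$ vertex of $D$ whose five $S$-neighbours are pairwise adjacent, or two adjacent $D$-vertices sharing most of their $S$-neighbourhoods. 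Assembling these forbidden local configurations around and between $D$-vertices into a finite, checkable list is the technical heart of the proof.

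Then I would discharge on the plane embedding, with initial charge $d(v)-6$ on each vertex and $2\ell(f)-6$ on each face, so that the total charge is $-12<0$. The rules would move charge from $S$-vertices and from faces toward $D$-vertices, and using the structural lemmas one would show that every $D$-vertex ends with charge at least a fixed positive amount; a global count that also tracks the charge retained by $S$ then forces $|D|\le |V(G_0)|/12$. Calibrating the rules so that one $D$-vertex genuinely ``consumes'' eleven $S$-vertices in its vicinity — matching the icosahedron exactly — is what pins down the constant $1/12$ rather than the $1/9$ coming from the cruder accounting; Theorem~\ref{mt} and Theorem~\ref{smt} provide the weaker amortized versions of this bookkeeping and would serve both as sanity checks and as tools for disposing of small or low-connectivity cases. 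An alternative, process-based route would alternate collection phases with single deletions while maintaining a potential that a deletion is allowed to drop by exactly $12$ per released ``credit''; this is cleaner to state but seems even harder to make tight.

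The main obstacle I anticipate is exactly this tightness: because the icosahedron meets the bound with equality, the discharging scheme has essentially zero room, so every bad local pattern — clusters of $D$-vertices, $D$-vertices meeting only degree-$5$ vertices of $S$, long alternating $D$--$S$ chains along a face — must be excluded by a maximality exchange or paid for exactly, with nothing to spare. Converting the inherently recursive condition ``$S$ is a \emph{maximum} $4$-degenerate induced subgraph'' into finitely many forbidden neighbourhoods that are strong enough to drive a sharp discharge, without a combinatorial explosion of cases, is the crux; a plausible fallback is to first secure a clean intermediate bound such as $10/11$ or $12/13$ with a coarser scheme and then iteratively tighten the structural lemmas until the icosahedral case is the unique obstruction.
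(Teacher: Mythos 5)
The statement you are addressing is not a theorem of the paper but an open conjecture: the authors prove only the weaker bound that one can delete fewer than $1/9$ of the vertices (Corollary~\ref{cmt}, via Theorem~\ref{mt}), single out the icosahedron as the conjectured extremal case, and leave $1/12$ unproven. So there is no proof in the paper to compare yours against, and what you have written is a research plan rather than a proof. Your preliminary reductions are sound: repeatedly collecting degree-$\le 4$ vertices to reach $\delta(G_0)\ge 5$, triangulating (which only strengthens the statement), the reformulation as $|D|\le |V(G_0)|/12$ for the complement of a maximum $4$-degenerate induced set $S$, the exchange argument giving $|N(u)\cap S|\ge 5$ for $u\in D$, and the charge normalisation summing to $-12$. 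But every step that would actually carry the argument is deferred: you never produce the finite list of forbidden configurations, never state the discharging rules, and never verify the global count. These are not routine details. Maximality of $S$ is a global condition, and beyond the single observation $|N(u)\cap S|\ge 5$ it is genuinely unclear that iterated exchanges yield finitely many excludable local patterns strong enough to drive a discharge with zero slack; your own text flags this as ``the crux'' without resolving it.

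Concretely, the place where your scheme will be tested first is the configuration the authors themselves identify as the obstruction to improving their constant (Figure~\ref{pconf}): a $5$-vertex surrounded by five $5$-vertices, where their potential $\Gamma$ drops by exactly $1$ rather than more. Any tight $1/12$ discharging must either exclude dense clusters of mutually adjacent low-degree vertices straddling $D$ and $S$ or pay for them exactly, and your proposal gives no mechanism for either. There is also an unexamined sign issue in your final accounting: with total charge $-12$ and each $D$-vertex ending positive, the inequality $|D|\le |V(G_0)|/12$ only follows if you simultaneously control from below the charge retained by $S$-vertices and faces, which you mention but do not set up. Until the forbidden-configuration lemmas are stated and proved and the discharging rules are written down and verified against them, this remains a plausible outline of an attack on an open problem, not a proof.
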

\begin{conjecture}
In every planar graph with at least $12$ vertices we can delete a vertex in such
a way that we can collect at least $11$ vertices.
\end{conjecture}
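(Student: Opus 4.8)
For a graph $H$ let $\core(H)$ denote the unique maximal subgraph of minimum degree at least $5$ --- the subgraph left after repeatedly deleting vertices of degree at most $4$, independently of the order of deletions. The full set $V(H)\setminus V(\core(H))$ can be collected, while any collectible set avoids $\core(H)$ (were a core vertex collected, the still-present core would give it degree $\ge5$ at that moment), so the largest collectible set in $H$ has exactly $|V(H)|-|V(\core(H))|$ vertices. Hence, for a planar graph $G$ with $n:=|V(G)|\ge12$, it suffices to find a vertex $v$ with $|V(\core(G-v))|\le n-12$. If $\core(G)$ is empty, then $G$ and every $G-v$ is $4$-degenerate and we are done. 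Otherwise $C:=\core(G)$ is planar with $\delta(C)\ge5$, so $|V(C)|\ge12$; furthermore $\core(G-v)=C$ when $v\notin V(C)$ (the vertices outside $C$ still peel off, merely sooner) and $\core(G-v)=\core(C-v)$ when $v\in V(C)$ (peel the vertices outside $C$ first, leaving the induced subgraph $C-v$). So if $G$ has at least $12$ vertices outside $C$, deleting one of them finishes the proof; otherwise $|V(C)|\ge n-11$, and it suffices to establish:

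\emph{Key Lemma.} Every planar graph $C$ with $\delta(C)\ge5$ has a vertex $v$ with $|V(\core(C-v))|\le|V(C)|-12$; that is, deleting $v$ and then collecting greedily removes at least $12$ vertices of $C$.

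For $|V(C)|\le18$ the Key Lemma follows from Corollary~\ref{cmt}: since $C$ is not $4$-degenerate, the corollary deletes at least one vertex, and it deletes fewer than $|V(C)|/9$ of them, so --- as $|V(C)|\le18$ --- exactly one vertex $v$; the remaining $|V(C)|-1\ge11$ vertices are then all collected. The substantive range is $|V(C)|\ge19$. Here I would set up a discharging argument, conveniently after reducing to $C$ being a planar triangulation (adding edges only raises degrees and hence only shrinks each $\core(C-v)$, so no generality is lost once the non-$2$-connected cases, for which a suitable piece is again planar of minimum degree $\ge5$, are dealt with separately). In a planar triangulation of minimum degree $5$ the charge $6-\deg$ sums to exactly $12$ and is positive only at the at least twelve vertices of degree $5$; the plan is to identify a short list of reducible configurations --- local neighbourhoods of a vertex $v$ for which one can prove that deleting $v$ sets off a cascade collecting at least $11$ further vertices (near $v$ a neighbour loses one degree when $v$ is removed and a second when one of \emph{its} own neighbours is collected, so degree-$6$ vertices further out are also drawn into the cascade) --- and to use discharging to show that a minimum counterexample must contain one of them.

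The genuine obstacle is this discharging step for $|V(C)|\ge19$. The icosahedron --- where deleting any vertex collects all eleven others while the graph itself is not $4$-degenerate --- shows the constant $12$ is already tight at $|V(C)|=12$, so the argument has essentially no slack and the reducible configurations must be described and checked very carefully; this is a markedly harder analogue of whatever discharging proves Theorem~\ref{smt}, which only needs a post-deletion cascade of length $5$ rather than $11$. An alternative worth trying is induction on $|V(C)|$ through $C_1:=\core(C-z)$ for a degree-$5$ vertex $z$ (one is done at once if $|V(C_1)|\le|V(C)|-12$, and otherwise one would lift a good vertex $v$ of the smaller graph $C_1$ back to $C$ after first collecting $z$ and the few vertices peeled off together with it), but this lifting fails when $v$ lies badly with respect to $z$ and that peeled part, so $z$ retains degree $5$ in $C-v$ and nothing cascades; it is then unclear that any parameter decreases. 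Making either route work is exactly what is missing.
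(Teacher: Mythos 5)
The statement you are trying to prove is one of the two open conjectures stated at the end of Section~1; the paper offers no proof of it (its strongest result in this direction is Theorem~\ref{smt}, which guarantees only $6$ collected vertices after one deletion), so there is no argument of the authors to compare yours against. Your preparatory reductions are essentially sound: the identification of the maximum collectible set with the complement of the $5$-core, the splitting according to whether $G$ has $12$ vertices outside $\core(G)$, and the disposal of the range $12\le|V(C)|\le 18$ via Corollary~\ref{cmt} (where ``fewer than $|V(C)|/9$'' forces exactly one deletion) are all correct. One local slip: adding edges \emph{enlarges} $\core(C-v)$ rather than shrinking it; the reduction to triangulations is still legitimate, but precisely because the triangulation is the worst case, not the best one, so your stated justification is backwards.

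The genuine gap is the one you name yourself: the Key Lemma for $|V(C)|\ge 19$ is asserted, not proved. Everything before it is bookkeeping; the entire mathematical content of the conjecture is concentrated in showing that a planar graph of minimum degree $5$ always contains a vertex whose deletion triggers a cascade collecting $11$ more vertices. Neither of your two proposed routes is carried out: the discharging route has no list of reducible configurations and no discharging rules (and, as you observe, the icosahedron leaves no slack, so a cascade of length $11$ is far harder to certify locally than the length-$6$ cascade behind Theorem~\ref{smt} --- the configurations involved can no longer be confined to the second neighbourhood of $v$), and the inductive route through $\core(C-z)$ admittedly breaks when the good vertex of the smaller graph fails to interact with the peeled part. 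As it stands this is a correct reduction of the conjecture to an equivalent, still open, statement about $5$-connected-like planar graphs, together with a research plan; it is not a proof.
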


\section{Induction invariants}
\label{sec:invariants}

To prove Theorem \ref{smt} we only need to find a vertex whose deletion allows us
to collect $6$ vertices
in the neighbourhood. To prove Corollary~\ref{cmt} in this straightforward manner
we would need to collect $8$ vertices
per one deleted vertex. We cannot guarantee this immediately in all cases,
but even if we collect only $6$
vertices, we do something else that helps us: we create a large face and thus decrease
the average degree of the graph.
For a planar graph $G$, let  
\begin{eqnarray}
\Phi(G) &=& \sum_{v\in V(G)} (\deg(v) - 5), \label{definephi}\\
\Gamma(G) &=& {1\over 12}|V(G)| +
\frac 1{36}\Phi(G)  + \frac 1{18}\tc(G), \label{defineGamma}
\end{eqnarray}
where  $\tc(G)$ is the number of tree components of $G$.
 Theorem \ref{thm:main2} below    is the actual theorem we are going to prove.
\begin{theorem}\label{thm:main2}
Suppose that $G$ is a planar graph. The following is true:
\begin{itemize}
\item[(1)]
We can collect all vertices of $G$, or delete a vertex of $G$ and then
collect at least $6$ vertices.
\item[(2)]
There is a set $S\subset V(G)$ with at most $\Gamma(G)$ vertices such that if we delete $S$  then   we
can collect all the remaining vertices of $G$.
\end{itemize}
\end{theorem}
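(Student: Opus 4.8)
The plan is to prove (1) and (2) together by induction on $|V(G)|$. It is convenient to first rewrite the weight: since $\Phi(G)=2|E(G)|-5|V(G)|$, one has $\Gamma(G)=\tfrac{1}{18}\bigl(|E(G)|-|V(G)|+\tc(G)\bigr)$, which for a connected non-tree $G$ equals $\tfrac{1}{18}(|F|-2)$ with $|F|$ the number of faces of a plane embedding. Consequently $\Gamma(G)\ge0$ always (equality exactly for forests), $\Gamma$ never increases under vertex deletion, $\Gamma$ is additive over connected components, and $\delta(G)=5$ forces $\Gamma(G)\ge1$, because a planar graph of minimum degree $5$ has at least $12$ vertices and at least $\tfrac52|V(G)|$ edges. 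Additivity (plus a routine case split for (1)) lets me assume $G$ is connected; if $|V(G)|\le6$ the average degree is at most $4$, so $G$ is $4$-degenerate and both parts hold (part (2) with $S=\varnothing$, using $\Gamma\ge0$). So assume $G$ connected with $|V(G)|\ge7$ and induct, splitting on the value of $\delta(G)$, which is at most $5$ by planarity. If $\delta(G)\le4$, collect a vertex $v$ of degree at most $4$ first: for (1), whichever alternative the induction hypothesis yields for $G-v$ extends to $G$ after prepending $v$; for (2), the set $S'$ for $G-v$ has $|S'|\le\Gamma(G-v)\le\Gamma(G)$ and still works for $G$, since we may collect $v$ before $G-v-S'$.

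The case $\delta(G)=5$ is the heart of the matter. Here $G$ is not $4$-degenerate, so for (1) I must exhibit a vertex whose deletion starts a cascade of at least six collections. I would extract such a vertex from the structure of minimum-degree-$5$ planar graphs: Euler's formula together with a discharging argument (or a light-edge / light-configuration theorem in the style of Kotzig and Borodin) produces a small local configuration with a low-degree vertex $v$ such that, after deleting $v$ and collecting greedily, at least six vertices get peeled off — the count being forced by a local edge/face estimate and the fact that the relevant $6$-vertex neighbourhood cannot be sealed off from the rest of $G$ without embedding $K_6$, which is non-planar. When the input is disconnected, or $\delta(G)\le4$ but $G$ is not $4$-degenerate, I first collect everything outside the $5$-core in peeling order and run the argument inside the core, which is planar of minimum degree $5$ and hence has at least $12$ vertices.

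For (2) with $\delta(G)=5$ I bootstrap from (1). Take the vertex $v$ supplied by (1), delete it, collect the whole greedy cascade $C$ (so $|C|\ge6$), and let $H^{\star}$ be the $5$-core of $G-v$. If $H^{\star}=\varnothing$, put $S=\{v\}$, which is allowed since $\Gamma(G)\ge1$. Otherwise recurse on $H^{\star}$ to get $S^{\star}$ with $|S^{\star}|\le\Gamma(H^{\star})$ and set $S=\{v\}\cup S^{\star}$; deleting $S$ from $G$ one can still run $C$ and then collect $H^{\star}-S^{\star}$. Since $|S|=1+|S^{\star}|$ is an integer and $|S^{\star}|\le\Gamma(H^{\star})$, it suffices to verify $\lfloor\Gamma(H^{\star})\rfloor\le\lfloor\Gamma(G)\rfloor-1$. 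This is where the face reading of $\Gamma$ pays off: deleting $v$ merges the faces around $v$ into one hole, each subsequent collection merges more faces into that hole, the vacated patch collapses to a single large face, and so $|F|$ — and hence $\Gamma$ — drops by enough that, with $\Gamma(G)\ge1$ to absorb the rounding, the floor goes down.

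The main obstacle is exactly this $\delta(G)=5$ analysis, in two layers. For (1), the sixth collected vertex: five collections come cheaply, but the sixth needs planarity used quantitatively, so the discharging/configuration bookkeeping has to be carried out with care, quite possibly across several configurations rather than one. For (2), making $\lfloor\Gamma\rfloor$ genuinely decrease: the estimate is tight — the icosahedron has $\Gamma=1$ and forces the deletion of exactly one vertex — so there is no slack at all; one must collect strictly more than six vertices when $\Gamma(G)$ is large, keep the number of tree components created under control, and throughout exploit the integrality of $|S|$. If routing (2) through (1) turns out too lossy for some configuration, the fallback is to fuse the two arguments and verify the required $\Gamma$-decrease configuration by configuration.
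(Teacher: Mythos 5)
Your setup observations are correct and in fact clean: the identity $\Gamma(G)=\tfrac{1}{18}\bigl(|E|-|V|+\tc(G)\bigr)$ is right, $\Gamma$ is non-negative, additive over components, non-increasing under vertex deletion, and $\delta(G)=5$ forces $\Gamma\ge 1$. The reduction to connected graphs of minimum degree $5$ matches the paper's Lemmas~\ref{lemma:connected} and~\ref{lemma:mindegree}. However, from that point on the proposal contains the statement of the hard problem rather than a proof of it. Saying ``Euler's formula together with a discharging argument produces a configuration'' is not an argument: the paper's proof consists almost entirely of setting up a three-step discharging procedure with a type table (Table~\ref{figt}), restricting attention to a good subgraph cut off by a triangle- or chordless-quadrilateral cut so that the local edge count $\Sigma_e$ can be bounded (Lemmas~\ref{la}--\ref{lemanoext}, \ref{lemacomp}), and then working through roughly two dozen configurations (Lemmas~\ref{lemaexample}, \ref{lema8}, \ref{lema6}, \ref{lema5}). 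None of that content is present here, and it is not the sort of thing that ``falls out'' of a generic light-edge theorem; the discharging rules and reducibility arguments were clearly co-designed.

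The more serious conceptual gap is the bootstrap of (2) from (1). You assume that deleting one vertex and collecting $\ge 6$ more drops $\Gamma$ by at least~$1$. This is false in general, and the paper says so explicitly in Section~\ref{sec:invariants}: a naive count would require $8$ collected vertices per deletion, and the authors compensate by engineering configurations that also create a large face. Concretely, with $\Delta|V|=7$ one needs $\Delta|E|\ge 25$ (assuming $\Delta\tc=0$), but seven removed vertices can induce up to $3\cdot 7-6=15$ internal edges in a planar graph, so a degree sum of $35$ gives only $\Delta|E|\ge 20$. To make the count work one must bound $\Sigma_e$ by exploiting the absence of bad cut-sets — precisely the ``good subgraph'' machinery you omit — and in several cases one must collect $7$, $8$, or even $11$--$12$ vertices (Configuration~10, Case~1 and~2 in Lemma~\ref{lema5}) or delete a carefully chosen neighbour rather than the hot vertex itself. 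Your acknowledged fallback (``fuse the two arguments and verify the $\Gamma$-decrease configuration by configuration'') is not a fallback; it is the proof, and it is what remains to be done.
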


Theorem~\ref{mt} is implied by Theorem~\ref{thm:main2} (2):
if $d$ is the average degree of $G$, then $\Phi(G)=(d-5)|V(G)|$ and if $d\ge 2$, then $G$ is not a tree, thus
$$
\Gamma(G) = \frac{1}{12}|V(G)|+\frac{1}{36}(d-5)|V(G)|=\frac{d-2}{36}|V(G)|.
$$

\begin{lema}
Any smallest counterexample to Theorem \ref{thm:main2} is connected.
\label{lemma:connected}
\end{lema}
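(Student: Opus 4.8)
The plan is to argue by contradiction in the usual way. Suppose $G$ is a counterexample to Theorem~\ref{thm:main2} with $|V(G)|$ minimum, and suppose for contradiction that $G$ is disconnected. Write $G = G_1 \sqcup G_2$, where $G_1$ is one connected component and $G_2 = G - V(G_1)$; both are nonempty planar graphs with strictly fewer vertices than $G$, so Theorem~\ref{thm:main2} holds for each of them. The two elementary facts driving the argument are: (i) a sequence of collections or deletions carried out inside a component $G_i$ is also a valid sequence in $G$, since removing vertices of the other component changes no degree inside $G_i$, and such sequences from different components can be concatenated; and (ii) each of $|V(\cdot)|$, $\Phi(\cdot)$ and $\tc(\cdot)$ is additive over disjoint unions, hence so is $\Gamma$, giving $\Gamma(G) = \Gamma(G_1) + \Gamma(G_2)$.

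For part (1), apply the theorem to $G_1$. If we cannot collect all of $G_1$, then we may delete a vertex of $G_1$ and afterwards collect at least $6$ vertices of $G_1$; by fact (i) these same moves are valid in $G$, which gives part (1) for $G$. If instead we can collect all of $G_1$, apply the theorem to $G_2$: either we can collect all of $G_2$, in which case concatenating the two collection sequences collects all of $G$, or we delete a vertex of $G_2$ and then collect $6$ vertices of $G_2$, which again works verbatim in $G$.

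For part (2), let $S_1 \subseteq V(G_1)$ and $S_2 \subseteq V(G_2)$ be the sets provided by Theorem~\ref{thm:main2}(2) applied to $G_1$ and $G_2$, so that $|S_i| \le \Gamma(G_i)$ and $G_i - S_i$ can be fully collected. Put $S = S_1 \cup S_2$. Deleting $S$ from $G$ leaves $(G_1 - S_1) \sqcup (G_2 - S_2)$, and by fact (i) the two collection sequences witnessing full collectibility of $G_1-S_1$ and $G_2-S_2$ concatenate to collect all of $V(G) \setminus S$. By fact (ii), $|S| = |S_1| + |S_2| \le \Gamma(G_1) + \Gamma(G_2) = \Gamma(G)$, as required.

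Hence $G$ satisfies both parts of Theorem~\ref{thm:main2}, contradicting the choice of $G$, so any smallest counterexample is connected. I do not expect a genuine obstacle here; the only points needing care are that degrees inside a component are unaffected by the rest of the graph (so collection and deletion sequences localize to components and compose across them) and the exact identity $\Gamma(G_1 \sqcup G_2) = \Gamma(G_1) + \Gamma(G_2)$, which is immediate from the definition of $\Gamma$ because $|V|$, $\Phi$, and $\tc$ each split as a sum of contributions from the components.
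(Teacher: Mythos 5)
Your proposal is correct and follows essentially the same route as the paper: split off a component $G_1$ and its complement $G_2$, invoke minimality on each, observe that deletion/collection sequences localize to components and concatenate, and use additivity of $|V|$, $\Phi$, and $\tc$ (hence $\Gamma$) to combine the deletion sets for part (2). Your write-up is slightly more explicit about the case analysis for part (1) than the paper's, but the underlying argument is identical.
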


\begin{proof}
Let $G$ be a smallest counterexample to Theorem \ref{thm:main2} which is not connected. Let $G_1$ be
a component of $G$
and let $G_2=G-G_1$.

Suppose that statement (1) of Theorem \ref{thm:main2} does not hold for
$G$.
Then $G$ cannot be collected. Therefore either $G_1$ or $G_2$ cannot be
collected.
Since $G$ is the smallest counterexample to Theorem \ref{thm:main2}, the graph
$G_1$ or $G_2$ contains a vertex whose deleting allows us to collect $6$
vertices,
a contradiction.

Suppose that statement (2) of Theorem \ref{thm:main2} does not hold for
$G$ but it holds for $G_1$ and $G_2$. We obtain sets $S_1$ and $S_2$
satisfying the conditions of statement (2) of Theorem \ref{thm:main2}.
The union of $S_1$ and $S_2$ satisfies
statement (2) of Theorem \ref{thm:main2}, a contradiction.
\end{proof}

\begin{lema}
Any smallest counterexample to Theorem \ref{thm:main2} does not contain a vertex
that can be collected.
\label{lemma:mindegree}
\end{lema}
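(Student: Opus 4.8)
The plan is the standard minimality argument. Suppose $G$ is a smallest counterexample to Theorem~\ref{thm:main2} and that some vertex $v$ can be collected, i.e.\ $\deg_G(v)\le 4$; put $G'=G-v$. Since $G'$ is a smaller planar graph, Theorem~\ref{thm:main2} holds for $G'$. The key observation is that removing $v$ only lowers the degrees of the remaining vertices, so $v$ can always be collected as the \emph{first} step from any subgraph of $G$ that still contains it; consequently every collecting sequence that is valid in $G'$, or in $G'$ after further vertices are deleted, is still valid once $v$ has been collected. I would use this to transfer both parts of the conclusion of Theorem~\ref{thm:main2} from $G'$ to $G$.

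For part~(1): if all of $G'$ can be collected, collect $v$ first and then follow that sequence to collect all of $G$. Otherwise there is $u\in V(G')$ such that, after deleting $u$, at least $6$ vertices of $G'-u$ can be collected; in $G$ delete the same vertex $u$, collect $v$ (its degree in $G-u$ is still at most $4$), and then collect those $\ge 6$ vertices of $G-u-v=G'-u$. Either way $G$ satisfies part~(1), a contradiction.

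For part~(2): apply part~(2) of Theorem~\ref{thm:main2} to $G'$ to obtain $S\subseteq V(G')$ with $|S|\le\Gamma(G')$ such that $G'-S$ can be wholly collected. Deleting $S$ from $G$ leaves $v$ together with $G'-S$; collecting $v$ first reduces this to $G'-S$, which can be collected, so $S$ works for $G$ too. It remains to check that $|S|\le\Gamma(G')=\Gamma(G-v)\le\Gamma(G)$, i.e.\ that $\Gamma(G-v)\le\Gamma(G)$. Removing $v$ deletes the summand $\deg_G(v)-5$ from $\Phi$ and lowers the summand of each of the $\deg_G(v)$ neighbours of $v$ by $1$, so $\Phi(G-v)=\Phi(G)-2\deg_G(v)+5$; substituting this into the definition of $\Gamma$ reduces the inequality $\Gamma(G-v)\le\Gamma(G)$ to
\[
\tc(G-v)-\tc(G)\ \le\ \deg_G(v)-1 .
\]

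This inequality about the number of tree components is the only point requiring a genuine argument, and I expect it to be the main (if modest) obstacle. I would prove it by examining the component $C$ of $G$ that contains $v$: since $C$ is connected, every component of $C-v$ contains a neighbour of $v$, so $C-v$ has at most $\deg_C(v)=\deg_G(v)$ components. If $C$ is a tree, then $C-v$ is a forest with exactly $\deg_C(v)$ components while $C$ was a single tree component, a net change of $\deg_G(v)-1$ (this also covers $\deg_C(v)=0$, i.e.\ $C=\{v\}$). If $C$ is not a tree, then $C-v$ cannot consist of exactly $\deg_C(v)$ tree components, for otherwise $C$, obtained from those components by joining $v$ to each of them by a single edge, would be a tree; hence at most $\deg_G(v)-1$ tree components are created, and none is lost since $C$ was not counted before. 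In every case the displayed inequality holds, so $\Gamma(G-v)\le\Gamma(G)$, $|S|\le\Gamma(G)$, and $G$ satisfies part~(2) — contradicting the choice of $G$ and proving the lemma.
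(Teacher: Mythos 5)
Your proof is correct and follows essentially the same route as the paper: delete the collectible vertex $v$, invoke minimality on $G-v$, and show $\Gamma(G-v)\le\Gamma(G)$ by tracking the change in $|V|$, $\Phi$, and $\tc$. The paper states without proof that deleting $v$ raises $\tc$ by at most $\deg(v)-1$ and omits the (trivial) transfer of part~(1); you supply both of these details carefully, but the underlying argument is identical.
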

\begin{proof}
Assume a vertex $v$ of $d \le 4$ is collected. Then
\[
\Gamma(G-v) = \Gamma(G)- \frac 1{12} + \frac 1{36}(5-d) - \frac 1{36} d + \frac 1{18}(\tc(G-v)-\tc(G)).
\]
Here $- 1/12$ is due to the decrease of the number of vertices
by one,   $+\frac 1{36}(5-d)$ is due to the removal of $v$ from the
sum defining $\Phi$
(equation (\ref{definephi})),
 $-\frac 1{36} d$ is due to the fact that  neighbours of $v$ are of
smaller degree after deleting $v$,
 the last term is due to the change of the number of tree components.
 Since deleting $v$ increases the number of
tree components   by at most $d-1$,   we conclude that
$
\Gamma(G-v)   \le \Gamma(G)$.
By the minimality of $G$, there is a set $S \subset V(G-v)$ with at most $\Gamma(G-v) \le \Gamma(G)$ vertices
such that deleting $S$ allows us to collect the remaining vertices of $G$.
\end{proof}

In the remainder of this paper, let $G$ be a plane graph which is a counterexample to Theorem \ref{thm:main2} of smallest order.
We shall derive a contradiction via discharging. Thanks to Lemmas~\ref{lemma:connected} and~\ref{lemma:mindegree}, $G$ is connected and has minimum degree $5$.

\section{Discharging procedure}
\label{sec:procedure}

For a positive integer $k$, a vertex of degree $k$ is called a $k$-vertex, and a vertex of degree at least $k$ is a $k^+$-vertex.
Similarly, a $k$-neighbour is a neighbour of degree $k$, and a $k^+$-neighbour is a neighbour of degree at least $k$.
A face of $G$ of length $k$ is referred to as a $k$-face and a face of length at least $k$ is referred to as $k^+$-face.
The sets of $k$-vertices, $k^+$-vertices, and $k^+$-faces of $G$ are denoted by $V_k$, $V_{k^+}$, and $F_{k^+}$, respectively.
The sets of all vertices, edges, and faces of $G$ are denoted by $V$, $E$, and $F$, respectively.

In figures, we put $k$ or $k^+$ besides a $k$- or a $k^+$-vertex, respectively (see Figure~\ref{fdd} for an example).
If no number is given, then the vertex has degree $5$ or $6$.

Each vertex of $V_{6+}$ is assigned a certain {\it type\/} according to Table~\ref{figt}.
If $w$ is of degree $d$,   contained  in  at least $n_{\nontr}$ non-triangular faces,
and has at most $n_5$ neighbours from $V_5$, then $w$ can have type $t$.
The type of $w$ is the type that occurs first in the table among
all the types $w$ can have.
The symbol $n_5=3c$ means that $w$ has exactly three $5$-neighbours
and all of them are consecutive in the embedding of $G$.

Let $vw$ be an edge such that $v\in V_5$ and $w\in V_{6+}$.
For every such edge we define the  maximum charge $\mc(v,w)$ that $v$ can
send to $w$. This  maximum  charge is given in the last column of Table~\ref{figt}.
If $w$ is of type 9c or 8c, then $\mc(v,w)$ depends on the position of
$v$ with respect to $w$:  the value of $\mc(v,w)$ is $1$ when $v$ is the central one of the three consecutive
$5$-neighbours of $w$ and $9/10$ otherwise.

\begin{table}
\begin{center}
\begin{tabular}{ccccc}
Type & Degree & Min. number& Max. number & Max. \\
($t$)&($d$)  & of non-tr. faces & of $V_5$ neigh. & charge \\
& & ($n_{\nontr}$) & ($n_5$) & ($\mc$)\\
\hline
10a & 10+ & 0 & 3 & 1 \\
10b & 10+ & 0 & $\infty$ & 1/2 \\
\hline
9a & 9 & 1 & 3 & 1 \\
9b & 9 & 0 & 2 & 1 \\
9c & 9 & 0 & 3c& 9/10, 1, 9/10 \\
9d & 9 & 0 & 9 & 1/2 \\
\hline
8a & 8 & 0 & 1 & 1 \\
8b & 8 & 1 & 2 & 1 \\
8c & 8 & 2 & 3c& 9/10, 1, 9/10 \\
8d & 8 & 0 & 2 & 9/10 \\
8e & 8 & 0 & 8 & 1/2 \\
\hline
7a & 7 & 0 & 1 & 4/5 \\
7b & 7 & 1 & 2 & 13/20 \\
7c & 7 & 0 & 2 & 2/5 \\
7d & 7 & 0 & 7 & 1/3 \\
\hline
6a & 6 & 1 & 1 & 2/5 \\
6b & 6 & 0 & 6 & 0 \\
\hline
\end{tabular}
\end{center}
\caption{Maximal charges that can be send to a vertex.}
\label{figt}
\end{table}

First, we assign certain initial charges to the vertices and faces of $G$.
Each $d$-vertex receives charge $6-d$ and each $\ell$-face receives charge $2(3-\ell)$.
In the following discharging procedure, we redistribute the charges between
vertices and faces in a certain way such that no charge is created or lost.
The  initial and final  charge of a vertex or a face $x$ is denoted by  $ch_0(x)$ and $\ch(x)$,   respectively. For a set $S\subset
V\cup F$, the expression $\ch(S)$ denotes the {\it total charge} of the set $S$, that
is, the sum of charges of the elements of $S$.

 By  Euler's theorem, the initial total charge $\ch_0(V\cup F)$ is equal
to
$$
\sum_{v\in V} (6-\deg(v)) + \sum_{f\in F} 2(3-\ell) = 6|V|-2|E| + 6|F|-4|E| =
12.
$$
Our aim is to move charge from vertices to faces. Note that only the $5$-vertices have positive initial charge.
The discharging procedure consists of the following three steps.

\medskip

\noindent {\bf Step 1: Discharging to faces.}
For each vertex $v$ and for every face $f\in F_{4+}$ that contains $v$ do the following:
\begin{enumerate}
\item If $v$ is of degree $6$, then send $2/5$ from $v$ to $f$.
\item If $v$  is not of  degree $6$, but both its neighbours on $f$ have degree $6$,
then send $3/5$ from $v$ to $f$.
\item If $v$  is not of  degree $6$ and one of its neighbours  is not of  degree $6$,
then send $1/2$  from $v$ to $f$.
\end{enumerate}

\medskip

\noindent {\bf Step 2: Distance discharging.}
In every subgraph of $G$ isomorphic to the configuration in Figure \ref{fdd} send
$1/5$ from vertex $v$ to vertex $w$
(vertices are denoted as in Figure \ref{fdd}; the depicted vertices are pairwise distinct).
\begin{figure}[b]
\begin{center}
\includegraphics[scale=.8]{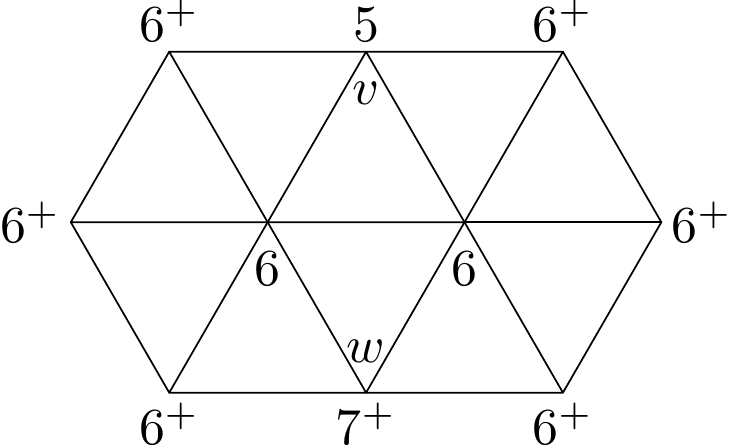}
\end{center}
\caption{Distance discharging.}
\label{fdd}
\end{figure}

\medskip

\noindent {\bf Step 3: Final discharging of the $5$-vertices.}
For each vertex $v\in V_5$ carry out the following procedure.
Order the neighbours $w\in V_{6+}$ of $v$ according to the value of $\mc(v,w)$ starting with the largest value;
let $w_1, w_2, \dots$ be the resulting ordering. If the value of $\mc(v,w)$ is the same for two neighbours
of $v$, then we order them arbitrarily.
For $i=1, 2, \dots$, send $ \min \{\mc(v,w_i) , \ch_a(v)\}$ from $v$ to $w_i$,
where $\ch_a(v)$ denotes the current charge of $v$.
If $\mc(v,w_i)  \ge  \ch_a(v)$, then we say that $v$ \emph{completely discharges} into $w_i$.

\medskip

The discharging procedure we have carried out has not changed the total charge.

\begin{lema} \label{lemma:f}
After the discharging procedure, no face has positive charge. Consequently,
$\ch(V)\ge 12$.
\end{lema}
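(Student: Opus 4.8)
The plan is to show that every face $f$ of $G$ ends up with charge at most $0$, and then deduce the ``consequently'' part from charge conservation. Since $\ch_0(V\cup F)=12$ and the discharging procedure preserves the total charge, if $\ch(f)\le 0$ for all $f\in F$ then $\ch(V)\ge 12$. So the whole content is the face analysis. We split it according to the length $\ell$ of $f$.

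First I would handle triangular faces: a $3$-face has initial charge $0$ and never receives any charge (Step~1 only moves charge to faces in $F_{4+}$, and Steps~2 and~3 move charge only between vertices), so $\ch(f)=0$. Next, the $4^+$-faces. A face $f$ of length $\ell\ge 4$ starts with charge $2(3-\ell)=6-2\ell$ and can only gain charge from its own incident vertices in Step~1. The key estimate is that \emph{each vertex $v$ on $f$ contributes at most $3/5$ to $f$ in Step~1}: a $6$-vertex gives $2/5$ (rule 1), and a non-$6$-vertex gives either $3/5$ (rule 2) or $1/2$ (rule 3), so the per-vertex contribution is at most $3/5$. Hence $\ch(f)\le (6-2\ell)+\tfrac35\ell = 6-\tfrac75\ell$, which is negative already for $\ell\ge5$ (indeed $6-\tfrac75\cdot 5=-1<0$), and for $\ell\ge 6$ it is at most $6-\tfrac75\cdot 6=-2.4<0$. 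This settles all faces of length at least $5$ with room to spare.

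The only case requiring a closer look is $\ell=4$: a quadrilateral face $f$ starts with charge $-2$ and the crude bound gives $-2+4\cdot\tfrac35 = \tfrac25>0$, so I must argue that the four vertices on $f$ cannot all send $3/5$. Rule 2 fires at $v$ only when \emph{both} of $v$'s neighbours along $f$ are $6$-vertices, while rule 1 gives a $6$-vertex only $2/5$. So if some vertex of $f$ is a $6$-vertex it sends only $2/5$, and then $\ch(f)\le -2 + 2/5 + 3\cdot 3/5 = 1/5 \ldots$ — this still needs care, so the real claim to prove is: \emph{among the four vertices of a $4$-face, the total Step-1 charge sent to $f$ is at most $2$}. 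If all four vertices are non-$6$-vertices then each sends $1/2$ (since each has a non-$6$-neighbour on $f$, namely its neighbours on $f$), giving exactly $2$; if exactly one or two vertices are $6$-vertices one checks the adjacent non-$6$-vertices drop to $1/2$ under rule~3, and a short case check on the cyclic pattern of $6$/non-$6$ labels around $f$ shows the sum never exceeds $2$; if three or four vertices are $6$-vertices the bound is immediate since $6$-vertices give only $2/5$ each. In every case $\ch(f)\le -2+2=0$.

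The main obstacle is precisely this $4$-face case: one has to enumerate the possible arrangements of $6$-vertices versus non-$6$-vertices around the face and verify that rule~3 suppresses enough of the $3/5$ contributions. This is a finite, routine case analysis, and I expect no genuine difficulty beyond bookkeeping; all larger faces and triangles are immediate from the per-vertex bound $3/5$. Once every face is shown to be non-positive, $\ch(V)=12-\ch(F)\ge 12$ follows at once.
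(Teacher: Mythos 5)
Your proof is correct, but you take a less uniform route than the paper. You bound $\ell\ge 5$ by the crude per-vertex estimate $3/5$, and for $\ell=4$ you fall back on a finite case check of the cyclic pattern of $6$-vertices versus non-$6$-vertices (which you sketch; I verified that the cases do close, with equality exactly when the pattern alternates or all four are non-$6$). The paper instead makes a single observation that covers all $\ell\ge 4$ at once: if a vertex $v$ sends $3/5$ to $f$, then by rule~2 both of its neighbours on $f$ are $6$-vertices and hence send $2/5$; since each vertex on the face cycle has only two neighbours on $f$, a simple double count gives that the number of vertices sending $3/5$ is at most the number sending $2/5$. Pairing each $3/5$-contributor with a $2/5$-contributor bounds the total received by $f$ above by $\ell/2$, and $\ch(f)\le (6-2\ell)+\ell/2 = 6-\tfrac{3}{2}\ell\le 0$ for $\ell\ge 4$. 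Both approaches prove the same thing; yours trades a slightly weaker uniform bound for extra bookkeeping on the boundary case $\ell=4$, while the paper's pairing argument dispenses with any case analysis.
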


\begin{proof}
The only step where a face can obtain positive charge is Step 1. Let $f$ be a
non-triangular face
of $G$.  Let $l$ be the length of $f$. Note that if a vertex sends $3/5$ to $f$,
then both neighbours of
$v$ on $f$ send $2/5$. Therefore the number of vertices that send $3/5$ to
$f$ is less than or equal to the number of vertices that send $2/5$ to $f$. This shows
that the face $f$ receives charge at most $l/2$, which is not enough to make $\ch(f)$ positive.
\end{proof}

Consequently, the sum of final charges of vertices of $G$ is positive.

\section{Avoiding small cut-sets}

A {\it triangle-cut\/} $C$ is a subgraph of $G$ isomorphic to $C_3$ such that
$V(C)$ is a cut-set of $G$.
A {\it chordless quadrilateral-cut\/} $C$ is an induced  subgraph of $G$ isomorphic to
$C_4$ such that $V(C)$ is a cut-set of $G$.
A {\it bad cut-set\/} is a triangle-cut, {or} a chordless quadrilateral-cut.

A \emph{good subgraph} of $G$ is either $G$ itself if $G$ contains no bad cut-set or a proper subgraph $H$ which
satisfies all the following conditions:
\begin{enumerate}
\item $H$ contains a bad cut-set $C$ of $G$.
\item  There is an embedding of $G$ such that
all the  vertices
of $G-V(H)$ are in the exterior of $C$, and all the vertices of $H -C$ are in the interior of $C$. If this condition is satisfied, then \emph{$C$ cuts $H$ from $G$}.
\item   $H-C$ contains no vertex $v$ that is in a bad cut-set of $G$.
\end{enumerate}

\begin{lema}\label{la}
The graph $G$ has a good subgraph $H$.
\end{lema}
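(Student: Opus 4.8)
The plan is to prove Lemma~\ref{la} by a minimal-counterexample-style argument \emph{within} the fixed minimal counterexample $G$: if $G$ has no bad cut-set, then $G$ itself is a good subgraph and we are done, so assume $G$ has at least one bad cut-set. Among all proper subgraphs $H$ of $G$ satisfying conditions (1) and (2) --- i.e., subgraphs of the form ``everything enclosed by some bad cut-set $C$, together with $C$'' for a suitable planar embedding --- I would pick one that is \emph{minimal} with respect to the number of vertices (equivalently, choose the bad cut-set $C$ and the ``inside/outside'' assignment so that the interior is as small as possible). Such an $H$ exists because $G$ has a bad cut-set $C$, and for a fixed embedding one of the two sides of $C$ together with $C$ is a proper subgraph of $G$ satisfying (1) and (2) (here we use that a triangle or a chordless $4$-cycle in a plane graph is a closed curve separating the plane into interior and exterior, and ``proper'' follows from $C$ being a cut-set). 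The claim is then that a vertex-minimal such $H$ automatically satisfies condition (3).

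The key step is verifying (3) for the minimal $H$. Suppose for contradiction that some vertex $v \in V(H) - C$ lies in a bad cut-set $C'$ of $G$. The goal is to produce from $C'$ a new bad cut-set $C''$ that cuts off a strictly smaller subgraph $H''$ which still contains a bad cut-set of $G$ --- contradicting minimality of $H$. Since $v$ is strictly inside the closed curve bounded by $C$, and $v \in V(C')$, the cut-set $C'$ must interact with the interior of $C$; I would analyze how the curve bounded by $C'$ meets the curve bounded by $C$. The cleanest route is to argue that $C'$ can be ``localized'': because $v$ is in the interior of $C$ and $C$ is a cut-set, the portion of $G$ that $C'$ separates, intersected with the interior of $C$, is itself cut off from $G$ by a bad cut-set contained in $H$; one then takes $H'' = H \cap (\text{region bounded by } C')$ (suitably closed up with $C''$), which is a proper subgraph of $H$ (hence strictly smaller, since it misses at least the vertices of $G - V(H)$ and some vertices of $H$ on the ``far'' side of $C'$), still satisfies (1) and (2) for $G$, and is a proper subgraph of $G$. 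This contradicts the minimality of $H$, so no such $v$ exists and (3) holds.

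The main obstacle I anticipate is the topological case analysis in the previous paragraph: two bad cut-sets $C$ and $C'$ in a plane graph can share vertices or edges, their bounding curves can cross in several patterns, and one must carefully check that the ``smaller region'' really does carry a \emph{bad} cut-set of $G$ (a triangle or chordless quadrilateral), not merely some separating set, and that it is genuinely smaller in the vertex count. In particular one has to handle the possibility that $C$ and $C'$ share two or more vertices and that the new separating curve one naturally writes down is built from arcs of both, so that the resulting cycle might a priori be too long or have a chord; showing it can always be replaced by a genuine triangle-cut or chordless-quadrilateral-cut of $G$ is the delicate point. A useful simplification is to first dispose of the case where $C'$ is entirely contained in $V(H)$ (then $C'$ itself, together with its interior, is a smaller candidate, giving the contradiction directly) and reduce the general case to this one by pushing $C'$ inward across $C$.

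I would present the argument in the order: (i) dispatch the case $G$ has no bad cut-set; (ii) define the family of candidate subgraphs and invoke Jordan-curve separation to see it is nonempty; (iii) choose $H$ minimal in vertex count; (iv) assume (3) fails, fix the offending $v$ and the bad cut-set $C'$ through it; (v) do the topological case analysis to build a strictly smaller candidate $H''$, using the special structure of triangles and chordless $4$-cycles to ensure $C''$ is again a bad cut-set; (vi) conclude by contradiction with minimality, so $H$ is a good subgraph.
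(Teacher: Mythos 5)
Your high-level strategy is the same as the paper's --- extremize a bad cut-set so that the enclosed region is as small as possible, and argue that minimality forces condition (3) --- but the proposal stops precisely where the proof actually starts. The passage you flag as ``the delicate point'' (step (v): given a bad cut-set $C'$ through a vertex $v$ strictly inside $C$, produce a strictly smaller bad cut-set) is not an implementation detail; it is the entire content of the lemma, and you give no argument for it. ``Take $H'' = H \cap (\text{region bounded by } C')$'' is not obviously a triangle or a chordless $4$-cycle, which is exactly what you must produce, and you do not show how to convert it into one. So as written this is a plan, not a proof.

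It is also worth noting that the paper does \emph{not} run a single minimization over all bad cut-sets at once; it runs a two-stage minimization, and the staging matters. First it picks a triangle-cut $C_A$ with smallest interior (or $C_A=\emptyset$ if none exists) and shows, by a short re-embedding argument, that the interior $H_A-V(C_A)$ contains no vertex of any triangle-cut. Only then does it look at chordless quadrilateral-cuts $C$ meeting $H_A-V(C_A)$, choosing one with smallest nonzero interior on the side away from $C_A$. In the quadrilateral step, if $v\in H-V(C)$ lay on another chordless quadrilateral-cut $C'$, the paper shows $v_2,v_4\in V(C')$ (using that only non-adjacent vertices of $C=v_1v_2v_3v_4$ can be separated by $C'$), picks a neighbour $x$ of $v$ inside one of the quadrilaterals $vv_2v_1v_4$ or $vv_2v_3v_4$, and checks that, say, $C''=vv_2v_1v_4$ is a \emph{chordless} quadrilateral-cut with strictly smaller interior. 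The chordlessness of $C''$ is where the first stage is used: if $v_1v$ were an edge, then $v_1vv_2$ or $v_1vv_4$ would be a triangle-cut through $v\in H_A-V(C_A)$, which the first stage has already excluded. Your single-stage minimization has no such fact available, so when you hit the potential chord $v_1v$ you would need a separate argument that the resulting triangle-cut also has strictly smaller interior; this is probably salvageable, but it is an additional step you have not supplied, and it illustrates why the paper peels off the triangle case first. You should either adopt the two-stage structure, or carry out the explicit case analysis (which bad cut-set type $C$ and $C'$ are, how many vertices they share, where the neighbour $x$ of $v$ lies, and why the resulting short cycle is again chordless and separating with a strictly smaller enclosed region) that your outline currently only names.
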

\begin{proof}
Assume that $G$ has a triangle-cut $C_A$.
Then $G$ can be embedded in the plane in such a way that the interior and the exterior of $C$ are both nonempty.
We choose the triangle-cut $C_A$ and the embedding so that the interior contains the minimum number
of vertices.
Let $H_A$ be the subgraph of $G$ induced by $V(C_A)$ and the vertices in the interior of $C_A$.
Thus $C_A$ cuts $H_A$ from $G$.

If $H_A-V(C)$ contains a vertex that is in a triangle-cut $C'$ of $G$,
then the vertices of $C_A-C'$ all belong to the same component $K$ of
$H_A-V(C')$. Hence $G$ can be embedded in such a way that the interior of $C'$ contains
less vertices than that of $C_A$, contrary to our choice of $C_A$.

  If $G$ has no   triangle-cut, then let $C_A = \emptyset$ and $H_A=G$.

Assume $H_A-V(C_A)$ contains a vertex that is in a chordless
quadrilateral-cut $C$ of $G$.
We choose $C$ and an embedding of $G$ in such a way that the component of $G-C$ containing $C_A$ lies
in the exterior of
$C$ (if $V(C_A)=\emptyset$, then we choose an arbitrary component), and the number of vertices that lie in the
interior of $C$ is minimum but nonzero. Let $H$ be the subgraph of $G$ induced by $V(C)$ and the vertices in the interior of $C$.
Thus $C$ cuts $H$ from $G$.

Assume $C=v_1v_2v_3v_4v_1$.
If $H-V(C)$ contains a vertex $v$ that is in a chordless
quadrilateral-cut $C'$ of $G$,  as only
 non-adjacent vertices can belong to different
components of $G-C'$,   we may assume
that $v_1$ and $v_3$ are in different components of $G-C'$. Consequently,
vertices $v_2$ and $v_4$ must be in $C'$.
As vertex $v$ has degree at least $5$, $v$   has a neighbour $x$ distinct from $v_1,v_2,v_3,v_4$.
The vertex $x$ lies either inside $vv_2v_1v_4$ or $vv_2v_3v_4$.
Assume $x$ lies inside   $vv_2v_1v_4$.
Let $C'' = vv_2v_1v_4$. The circuit $C''$ is a chordless quadrilateral-cut of $G$:
the vertex $x$ is inside $C'$, vertex $v_3$ is outside $C'$,
$v_2v_4$ is not a chord since $C$ has no chord, and finally
$v_1v$ is not chord because if it was, then either
$v_1vv_2$ or $v_1vv_4$ is a triangle-cut of $G$. The interior of $C''$ has fewer vertices than that of $C$, contradicting our choice of $C$.

If  $H_A-V(C_A)$ contains no vertex that is in a chordless
quadrilateral-cut  of $G$. then $H=H_A$ and $C=C_A$. In any case, $H$ and $C$ satisfy the conditions in the definition of a good subgraph.
\end{proof}

Let $H$ be a good subgraph whose existence is guaranteed by Lemma~\ref{la} and $C$ the cut separating it from the rest of $G$ ($C$ is empty if $H=G$).
The vertices from $V(C)$ will be called \emph{cut vertices}.
In case $C$ is a triangle-cut, vertices in $H-V(C)$ that are adjacent to two vertices of $C$ are
\emph{extraordinary}. In all possible cases for $C$, vertices from $H-V(C)$  which are not extraordinary are called \emph{ordinary}.

\begin{lema}
The graph $H$ contains an extraordinary vertex $v$ with $\ch(v) \ge 2$ or the sum of the final charges of ordinary vertices in $H$ is positive.
\label{lemanoext}
\end{lema}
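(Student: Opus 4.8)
The plan is to reduce the statement to a purely local charge count inside the region cut off by $C$, using Euler's formula for that region together with the fact that only a bounded amount of charge can cross the small cut $C$.

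First, the easy case $H=G$: then $G$ has no bad cut-set, so there are no cut vertices and no extraordinary vertices, every vertex of $G$ is ordinary, and the sum of the final charges of ordinary vertices equals $\ch(V)\ge12>0$ by Lemma~\ref{lemma:f}. So assume $C\ne\emptyset$. Let $V_R=V(H)\setminus V(C)$ be the set of interior vertices of $H$ (so $V_R$ is the disjoint union of the ordinary and the extraordinary vertices), and let $F_R$ be the set of faces of $G$ drawn inside $C$; these are exactly the faces of $H$ other than the one bounded by $C$. A smallest counterexample has minimum degree $5$, so a lone interior vertex is impossible and $|V_R|\ge2$; moreover, as $G$ is connected and $V(C)$ is a cut-set, every component of $G-V(C)$ lying inside $C$ sends an edge to $V(C)$, so $e(V_R,V(C))\ge1$, where $e(V_R,V(C))$ denotes the number of edges between $V_R$ and $V(C)$. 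Applying to $H$ the Euler-formula computation used for $\ch_0(V\cup F)$ and isolating the interior, one gets, using that $C$ is a chordless cycle of length $|V(C)|$,
$$
\ch_0(V_R)+\sum_{f\in F_R}2\bigl(3-\ell(f)\bigr)\;=\;6+2|V(C)|-\sum_{v\in V(C)}\bigl(6-\deg_H(v)\bigr)\;=\;6-2|V(C)|+e\bigl(V_R,V(C)\bigr).
$$

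The core step is to follow the charge crossing $C$. Since $V(C)$ is a cut-set, every face incident with a vertex of $V_R$ lies in $F_R$, so in Step~1 the interior vertices only feed interior faces; in Step~3 an interior $5$-vertex can lose charge only to its $6^+$-neighbours (possibly in $V(C)$), an interior $6^+$-vertex can receive charge only from $5$-vertices of $V(C)$, and Step~2 moves charge across $C$ only through the $O(1)$ configurations straddling it (bounded using $|V(C)|\le4$ and condition~(3) of the good subgraph, which forbids interior vertices from lying in a bad cut-set). Feeding these observations into the displayed identity — bounding the charge absorbed by $F_R$ by $-\sum_{f\in F_R}2(3-\ell(f))$ via Lemma~\ref{lemma:f}, and discarding the nonnegative inflow from $5$-vertices of $V(C)$ — one gets a bound roughly of the form $\ch(V_R)\ge e(V_R,V(C))-(\text{charge lost to }V(C)\text{ and across }C)$, in which each loss is at most about one unit of charge per edge incident to a cut vertex and is therefore absorbed by a fraction of $e(V_R,V(C))$. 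The calibration I would aim for is $\ch(V_R)\ge 2k$, where $k$ is the number of extraordinary vertices; condition~(3) is again what makes the interior structure tame enough for this to hold even when $e(V_R,V(C))$ is small.

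Finally, I would use the good-subgraph conditions to control the extraordinary vertices. If a triangle-cut vertex were adjacent to all three vertices of $C$, or if two extraordinary vertices were adjacent to the same pair of cut vertices, one exhibits a triangle-cut (or a smaller bad cut-set) through an interior vertex, contradicting the choice of $H$; hence every extraordinary vertex is adjacent to exactly two cut vertices, distinct extraordinary vertices use distinct cut edges, and therefore $e(V_R,V(C))\ge 2k$. Combined with the bound above this gives $\ch(V_R)\ge2k$. Since $\ch(V_R)=\ch(\text{ordinary vertices})+\ch(\text{extraordinary vertices})$, if no extraordinary vertex has final charge at least $2$ then $\ch(\text{extraordinary vertices})<2k\le\ch(V_R)$, which forces $\ch(\text{ordinary vertices})>0$, the second alternative of the lemma.

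The main obstacle is the boundary bookkeeping in the core step: quantifying precisely how much charge each of the three discharging steps leaks out of the interior region — in particular handling the distance-discharging configurations straddling $C$ and the cut vertices of small degree in $H$, whose large initial charge must be shown to be dispensable — and carrying out the cases $|V(C)|=3$ and $|V(C)|=4$ separately, the quadrilateral case having no extraordinary vertices but an outer face of charge $-2$, so that the count only closes after using the minimality of the chosen cut to ensure $e(V_R,V(C))$ is large enough.
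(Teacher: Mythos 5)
Your high-level structure is the same as the paper's: compute the initial charge trapped inside the bad cut via an Euler count, bound the charge leaking across $C$, and compare against what extraordinary vertices can hold. The $H=G$ case, the Euler bookkeeping identity, and the observation that each extraordinary vertex uses a distinct pair of cut vertices (via condition~(3) of a good subgraph) are all correct. But the proposal does not actually carry out the load-bearing step and, in one place, aims at a bound that is likely too strong to hit.

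The gap you flag yourself — quantifying the outflow through $C$ in Steps 1--3 — is precisely the technical content of the lemma, and the outline does not close it. The paper bounds the outflow by counting the interior vertices adjacent to $V(C)$ (at most $\sum_i(\deg_H(v_i)-2) - k$ in the triangle case, since each of the $k$ extraordinary vertices uses two cut edges) and then subtracting the charge such vertices are forced to dump into non-triangular boundary faces in Step~1; the distance-discharging leakage is handled by observing that the intermediate $6$-vertices in a straddling configuration discharge nothing. This yields $\ch(\K)\ge 2k-c$ in the triangle case, where $c$ is the total charge sent out of $\K$ by extraordinary $5$-vertices. That is strictly weaker than the $\ch(V_R)\ge 2k$ you ``aim for,'' and the stronger bound need not hold when $c>0$. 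The paper salvages the comparison because the $-c$ reappears on the other side ($\ch(Y_5)\le |Y_5|-c$), so it cancels; your outline has no mechanism for recovering that cancellation. Separately, in the quadrilateral case you suggest using ``the minimality of the chosen cut to ensure $e(V_R,V(C))$ is large enough,'' but minimality of the cut controls the number of interior vertices, not the number of boundary edges, and the paper uses no such bound: what makes the computation close is the $\ge 4/5$ Step~1 discharge into each non-triangular boundary face (four minus the number $q$ of triangular ones), contributing the crucial $16/5$ in the displayed inequality. Without that term the chordless-quadrilateral count does not show $\ch(\K)>0$.
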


\begin{proof}
If $V(C) = \emptyset$, then $H=G$ and all vertices are ordinary; the statement of the lemma is immediately implied by Lemma
\ref{lemma:f}.
We split the rest of the proof into two cases according to the type of the bad cut-set $C$---either
$C$ is a triangle-cut or $C$ is a chordless quadrilateral-cut.
In our embedding of $G$, the inner vertices and the inner faces of $C$ will be collectively called the \emph{kernel}
of $H$ and will be denoted by $\K$. The ``outer face'' of $H$ (bounded by $C$) is denoted by $f$.

\smallskip

\noindent {\bf Case 1: $C$ is a chordless quadrilateral-cut.}
There are no extraordinary vertices in this case.
Let $C=v_1v_2v_3v_4v_1$.
Let us compute the initial charge on the kernel of $H$.
Note that this charge is the same as if we have assigned initial charges in $H$ instead of $G$.
When the initial charges are assigned to vertices and faces of $H$ as a plane graph, the total charge is $12$.
Since the outer face has charge $-2$ and the vertex $v_i$ of $C$ has charge $6-\deg_H(v_i)$,
the initial charge of the kernel $\K$ is
$$
\ch_0(\K) = 12+2-\sum_{i=1}^4 \left( 6-\deg_H(v_i)\right).
$$

The initial charge of $\K$ may get lost via discharging to $V(C)$ at Step 3 or via distance discharging at Step 2.
Each $5$-vertex $v$ in $\K$ has initial charge $1$. Such a vertex $v$ first sends charge at least $2/5$
to each of its incident non-triangular faces, then sends charge $1/5$ to a distance $2$ neighbour if Step 2 applies, and then
sends some of the remaining charge to its $6^+$-neighbours.

For simplicity, we first assume that no charge of $\K$ is lost via distance discharging.
Let $f_i$ be the bounded face of $H$ incident to $v_iv_{i+1}$ for $i\in \{1,2,3,4\}$ (indices taken modulo $4$, i.e., $f_4$ is incident to $v_4v_1$).
If $f_i$ is a triangle, then $v_i$, $v_{i+1}$ have a common neighbour. Assume $q$ of the  faces $f_1,f_2,f_3,f_4$ are triangles, and $4-q$
of them are $4^+$-faces. Then the number of $5$-vertices in $\K$ adjacent to $V(C)$ is at most $\sum_{i=1}^4 (\deg_H(v_i)-2) -q$,
and the amount of charge send from these vertices to faces in $\K$ is at least $(4-q)\cdot 4/5$. Hence the total amount of charge sent from $\K$ to
$V(C)$ is at most
  \[
  \sum_{i=1}^4 (\deg_H(v_i)-2) -q - (4-q)\frac 45 \le \sum_{i=1}^4 (\deg_H(v_i)-2) - \frac {16}5
  \]
Therefore, the final charge of $\K$ is
\begin{eqnarray}
\ch(\K) \ge 12+2-\sum_{i=1}^4 \left( 6-\deg_H(v_i)\right) -
\sum_{i=1}^4 \left( \deg_H(v_i)-2 \right) + \frac{16}{5} > 0. \label{eqn}
\end{eqnarray}

\smallskip

If some vertices   send charge  from $\K$ via distance discharging out of $\K$, then
each such instance of distance discharging implies that two neighbours of $V(C)$ in $\K$ have degree $6$
which prevents these two vertices to discharge any charge outside
$\K$. So the total charge send from $\K$ to $G-\K$ is less than the amount estimated above.

Thus in any case, $\ch(\K) > 0$. Thanks to Lemma \ref{lemma:f}, faces do not have
positive charge and so the sum of charges of ordinary vertices is positive.
\bigskip

\noindent {\bf Case 2: $C$ is a triangle-cut.}
Let $v_1$, $v_2$, and $v_3$ be the vertices of $C$.
 Similarly as in Case 1, the initial charge of the kernel is
$$
\ch_0(\K) = 12-\sum_{i=1}^3 \left( 6-\deg_H(v_i)\right).
$$
Let $X$ be the set of vertices in $\K$ adjacent to $V(C)$.
Each $v \in X$ may send charge $1$ out of $\K$.
Thus the total charge send from $\K$ to $G-\K$ is bounded above by
$$|X| \le \sum_{i=1}^3 \left( \deg_H(v_i)-2 \right).$$
If $\K$ has no extraordinary vertex, then the bounded faces of $H$ incident to the edges of  $C$ are $4^+$-faces.
Hence vertices in $X$ send at least $12/5$ of the charge to faces in $\K$. So the
 total charge send from $\K$ to $G-\K$ is bounded above by  $|X|-12/5$, implying that
\[
\ch(\K) \ge 12-\sum_{i=1}^3 \left( 6-\deg_H(v_i)\right) - \sum_{i=1}^3 \left( \deg_H(v_i)-2 \right)   +   \frac{12}5 > 0.
\]

Assume $\K$ has extraordinary  vertices. Let $Y_5$ be the set of extraordinary $5$-vertices
and $Y_{6^+}$ be the set of extraordinary $6^+$-vertices.
As each extraordinary  vertex is adjacent to two vertices of $V(C)$, we have $$|X| \le \sum_{i=1}^3 \left( \deg_H(v_i)-2 \right)-(|Y_5|+|Y_{6^+}|).$$
Assume that the vertices from $Y_5$ sent total charge $c$ out of $\K$. Vertices from $Y_{6^+}$ do not send any charge out of $\K$. Finally, ordinary vertices can send at most $|X|-(|Y_5|+|Y_{6^+}|)$, and so the total charge send from $X$ to
$G-\K$ is at most $c+|X|-(|Y_5|+|Y_{6^+}|)$. Therefore,
\begin{eqnarray*}
\ch(\K) &\ge& \ch_0(\K) - \sum_{i=1}^3 \left( \deg_H(v_i)-2 \right) + 2(|Y_5|+|Y_{6^+}|)-c\\
&=& 2(|Y_5|+|Y_{6^+}|)-c.
\end{eqnarray*}
Note that $\ch(Y_5) \le |Y_5|-c\le 2|Y_5|-c$ (the last inequality is strict if $Y_5\neq\emptyset$).
If a vertex of $Y_{6^+}$ has final charge at least $2$, the lemma is proved; otherwise $\ch(Y_{6^+}) \le 2|Y_{6^+}|$ (and the inequality is strict if $Y_{6^+} \ne \emptyset$). Since $Y_5 \cup Y_{6^+}\neq \emptyset$, we have $2|Y_5|-c +2|Y_{6^+}|>\ch(Y_5 \cup Y_{6^+})$.
Therefore $\ch(\K)>\ch(Y_5 \cup Y_{6^+})$ and so the final charge of $\K-(Y_5 \cup Y_{6^+})$  is positive.

 Thanks to Lemma \ref{lemma:f}, faces do not have
positive charge and so the sum of charges of ordinary vertices is positive.
\end{proof}

\section{Analysis of configurations}

According to Lemma \ref{lemanoext}, after discharging, $H$ has an ordinary positive vertex or
an extraordinary vertex with charge at least $2$. Our aim is to prove that both
cases lead to a contradiction. In either case, we will do it by examining an
exhaustive list of configurations and showing that none of those configurations
can occur in the minimal counterexample $G$.

In each of the configurations, we obtain a graph $G'$ from $G$ by
deleting a vertex $v$ and collecting several
other vertices (always at least six) subsequently.
This ensures that statement (1) of
Theorem~\ref{thm:main2} is true for $G$. The graph $G'$ has less vertices than $G$,
hence Theorem \ref{thm:main2} holds for $G'$ and thus there exists a subset $S'\subseteq V(G')$
 of order at most $\Gamma(G')$  whose deletion allows us to collect the remaining vertices of $G'$.
Consequently, the deletion of the set $S=S' \cup \{v\}$ allows us to collect all
the vertices of $G$. If $\Gamma(G) \ge \Gamma(G')+1$, then this contradicts the
fact that $G$ is a counterexample
to Theorem \ref{thm:main2}. This would show that the examined configuration is
not contained in $G$.

The critical part is to prove $\Gamma(G) \ge \Gamma(G')+1$. The
computation consists of several steps; we demonstrate it in full detail in the
proof of Lemma~\ref{lemaexample}. After that, we introduce a short notation that
will allow us to skip repetitive arguments and help the reader to track all the
details.

\begin{lema}\label{lemaexample}
The graph $G$ has no extraordinary vertex $v$ with $\ch(v)\ge 2$.
\end{lema}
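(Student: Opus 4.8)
The plan is to rule out the existence of an extraordinary vertex $v$ with final charge $\ch(v)\ge 2$ by a direct structural analysis. Recall that an extraordinary vertex exists only when the good subgraph $H$ is cut off by a triangle-cut $C=v_1v_2v_3$, and $v$ is a vertex of $H-V(C)$ adjacent to (at least) two of $v_1,v_2,v_3$, say $v_1$ and $v_2$. The first thing I would do is bound how much charge $v$ could possibly accumulate. Since $v$ has minimum degree $5$ (Lemma~\ref{lemma:mindegree}) and starts with charge $\mathrm{ch}_0(v)=6-\deg(v)\le 1$, and since $v$ receives charge only from $5$-neighbours in Step~3 (and at most $1/5$ from one distance-2 vertex in Step~2, but then $v$ must have two degree-$6$ neighbours, which constrains things), to reach $\ch(v)\ge 2$ the vertex $v$ must receive at least $1$ (if $\deg(v)=5$) or more (if $\deg(v)\ge 6$, but then $v$'s own charge is $\le 0$, so it must receive $\ge 2$) from its $V_5$-neighbours. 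Using the $\mc$ bounds from Table~\ref{figt} — where the maximum a single $5$-neighbour can send is $1$, and only for vertices of sufficiently high degree and few non-triangular faces — I would argue that $v$ must be a $6^+$-vertex with several $5$-neighbours sitting in triangular faces, and that all (or almost all) of these $5$-neighbours completely discharge into $v$.

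The key geometric observation is that $v$, being adjacent to $v_1$ and $v_2$ (which are themselves adjacent, since $C$ is a triangle), sits with $v_1,v_2$ in a near-closed region: the edges $vv_1$, $vv_2$, $v_1v_2$ bound a triangular or near-triangular part of $H$, and by the minimality of the interior of $C_A$ established in Lemma~\ref{la}, there is very little room around $v$ on the $C$-side. I would extract from this the fact that $v$, together with $v_1,v_2$ and its $V_5$-neighbours that feed it charge, forms one of a small, explicitly enumerable family of configurations — each a wheel-like patch of triangles around $v$ with $5$-vertices on the rim. For each such configuration I would apply the reduction scheme outlined just before the lemma: delete a single vertex (a suitable $5$-neighbour of $v$, or $v$ itself) and then greedily collect at least six vertices among $v$ and its neighbours, all of which have become low-degree; this gives a graph $G'$ with $\Gamma(G)\ge\Gamma(G')+1$, contradicting minimality of $G$. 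The arithmetic of $\Gamma(G)-\Gamma(G')$ — counting the $-1/12$ per vertex removed, the $\Phi$-change from degree drops, and the $\tc$-change — is exactly the bookkeeping promised to be shown "in full detail" in this lemma.

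The main obstacle I expect is the case analysis needed to pin down the configurations around $v$: I must be careful that the degree-$6$ default (vertices with no label have degree $5$ or $6$) does not let extra edges escape from the patch, and that when I delete a vertex and start collecting, I really can collect six distinct vertices — this requires checking that enough of $v$'s neighbours drop to degree $\le 4$ and that collecting them does not strand the rest. The distance-discharging contribution from Step~2 is a mild nuisance: whenever it adds $1/5$ to $v$, it forces two degree-$6$ neighbours of $v$, which I will have to incorporate into the configurations rather than treat separately. I would organize the proof so that the charge bound forces a short list of local pictures, then dispatch each picture by the delete-one-collect-six reduction, doing the $\Gamma$-computation explicitly the first time and abbreviating it thereafter.
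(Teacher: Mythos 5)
Your overall plan matches the paper's proof: bound $v$'s charge via Table~\ref{figt} and Lemma~\ref{l7} to pin down the local configuration, then delete one vertex, collect neighbours, and verify $\Gamma(G)\ge\Gamma(G')+1$. Three points to tighten. First, the case $\deg(v)=5$ is impossible outright: a $5$-vertex has initial charge $1$ and never \emph{receives} charge in any step, so its final charge cannot reach $2$; thus $v\in V_{6^+}$ immediately and must receive at least $\deg(v)-4\ge 2$. Second, your ``small family of configurations'' reduces to exactly one: a quick run through the types in Table~\ref{figt} together with the distance-discharging bound of Lemma~\ref{l7} shows that only type $8e$ with all eight neighbours of degree $5$ (and hence all incident faces triangular) can recoup the deficit $\deg(v)-4$. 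Third, the paper makes no use of the geometric ``near-closed region'' bounded by $vv_1$, $vv_2$, $v_1v_2$, nor of the minimality of the interior of $C$; the only fact used from $v$ being extraordinary is condition~3 of a good subgraph, namely that $v$ lies in no bad cut-set of $G$. That single fact bounds $\Sigma_e$ (any extra edge among $v$ and its neighbours would create a triangle-cut through $v$) and rules out new tree components (a vertex of degree $\le 1$ in $G'$ would force a chordless quadrilateral-cut or a triangle-cut through $v$). With those corrections, your reduction --- delete $v$, collect all eight of its neighbours, and check $\Delta\Gamma\ge 1$ --- is precisely the paper's argument.
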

\begin{proof}
Suppose, for a contradiction, that $G$ contains such a vertex $v$.
Any $5$-vertex has initial charge $1$ and then its charge only decreases, hence $v\in V_{6+}$.
The initial charge of $v$ is $6-\deg_G(v)$, so it
has to receive charge at least $\deg(v)-4$ during the discharging phase.
Note that $v$ can receive at most $\deg(v)/5$ via distance discharging, so most of the charge
has to come from $5$-neighbours in Step~3.
Moreover, any distance discharging reduces the possible number of $5$-neighbours.
A short case analysis left to the reader shows that $v$ must be of type $8e$.
In addition, $v$ must be surrounded by $8$ vertices $v_1,\dots, v_8$ of degree $5$,
and all the faces surrounding $v$ are triangles.

We can delete $v$ and collect $v_1, \dots , v_8$, obtaining a graph $G'$.
All we have to show is that $\Gamma(G) \ge \Gamma(G') +1$.
The function $\Gamma$ depends on three parameters: the number of vertices, the
value of the function $\Phi$,
and the number of tree components.  The  removal of vertices $v, v_1,
\dots , v_8$ affects the value of $\Gamma$ as follows:
\begin{itemize}
\item[1.]
The number of vertices decreases by $9$.

\item[2a.]
The value of $\Phi$ decreases because the removed vertices do not contribute to the sum \eqref{definephi} anymore.
In most of our configurations, we do not know the degree of every vertex precisely,
but for every vertex $w$ we have a lower bound $\text{mindeg}(w)$ on its degree.
Let $b_i$ be the number of vertices with $\text{mindeg}(w)=i$ for $i\in \{5, \dots
,9\}$ and let $b_{10}$ be the number of vertices with $\text{mindeg}(w)\ge 10$.
Let $\text{\bf b}=(b_5, b_6, b_7, b_8, b_9, b_{10})$; in our case,
$\text{\bf b}=(8,0,0,1,0,0)$. The value of $\Phi$ is decreased by at least
$b_6+2b_7+3b_8+4b_9+5b_{10}$ because of the removal of the vertices from the sum
(\ref{definephi}). In our case, $\Phi$ decreased by at least $3$.

\item[2b.] The value of $\Phi$ is also decreased because the neighbours of the removed
vertices have smaller degree in $G'$.
Let $\Sigma_d$ denote the sum of the degrees of the
removed vertices; clearly $\Sigma_d \ge 5b_5+6b_6+7b_7+8b_8+9b_9+10b_{10}$.
Let $\Sigma_e$ be the number of edges
of the graph induced by $v, v_1, \dots , v_8$. Clearly, the value of
$\Phi$ decreases by at least
$\Sigma_d-2\Sigma_e$ because of the neighbours' degree reduction.

The only remaining problem is to count $\Sigma_e$. We have $8$ edges between $v$
and $v_1, \dots , v_8$.
We have another $8$ edges between vertices $v_1, \dots , v_8$ on the triangular
faces containing $v$.
No other edge may exist in the subgraph of $G$ induced by $v, v_1, \dots , v_8$. Such an
edge would together with $v$ induce a triangle-cut of $G$, which contradicts that $v$ is extraordinary.
Therefore $\Sigma_e\le16$. (Note that we only need an upper bound on $\Sigma_e$.)
We conclude that $\Phi$ decreases by  at least  $16$ because of decreased degrees of the
neighbours of removed vertices.
In total $\Phi$ decreases by  at least $19$.

\item[3.]  Now we  show that no new tree component is created (creation of tree components during reduction increases $\Gamma$).
Assume that such a tree component exists; it must have a vertex $w$ of degree
at most $1$ in $G'$. The vertex $w$ therefore has at least four neighbours
among the deleted vertices. All four
these neighbours are neighbours of $v$;
let $v_i$ and $v_j$ be two non-adjacent of them (note that we only need three of the four neighbours to proceed with this kind of argument).
The cycle $vv_iwv_jv$ is a chordless quadrilateral cut, and this contradicts the assumption that $v$ is extraordinary.
Therefore, no tree component is created.

This argument also holds in configurations when $w$
can be a neighbour of $v$. In this case a triangle-cut containing $v$, $w$ and some other neighbour of $v$
is created.
\end{itemize}
The value of $\Gamma$ decreases by at least $9/12+19/36 =23/18$.  
\end{proof}

The reduction idea used in the proof of Lemma \ref{lemaexample} will be used many times.
We summarize the key points of the calculation in the following lemma; its statement is structured with respect to the phase of the computation.
\begin{lema}\label{lemacomp}
Suppose that we  remove an ordinary vertex $v$ together with some neighbours $v_1, \dots, v_{m_1}$
of $v$ and some vertices $v_1', \dots, v_{m_2}'$ at distance $2$ from $v$.
The following statements are true:
\begin{itemize}
\item[($\Sigma_e$):] Two neighbours $v_i$ and $v_j$ of $v$ are joined by an edge if and only if $vv_iv_j$ is a triangular
face. The vertex $v'_k$, for $k\in\{1,\dots,m_2\}$, can be adjacent to at most two neighbours of $v$. If $v'_k$ is adjacent to two
neighbours of $v$, say $v_i$ and $v_j$, then either $v'_kv_ivv_j$ is a $4$-face, or $v'_kv_iv_j$ and $vv_iv_j$ are triangular faces.
\item[($\Delta\Phi$):] $\Delta\Phi \ge 5b_5+7b_6+9b_7+11b_8+13b_9+15b_{10}-2\Sigma_e$.
\item[($\tc$):] If $m_2\le 1$ (that is, we removed at most one vertex at distance $2$ from $v$), then no new tree component is created.
\item[($\Delta \Gamma$):] $  \Delta \Gamma = \Delta |V|/12 +   \Delta \Phi /36 +  \Delta\tc/18$.
\end{itemize}
\end{lema}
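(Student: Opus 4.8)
The plan is to check the four items separately; $(\Delta\Phi)$ and $(\Delta\Gamma)$ are pure bookkeeping, whereas $(\Sigma_e)$ and $(\tc)$ merely repackage the planarity arguments already carried out in the proof of Lemma~\ref{lemaexample}. Throughout I would write $R=\{v,v_1,\dots,v_{m_1},v_1',\dots,v_{m_2}'\}$ for the removed set, put $G'=G-R$, and for $X\in\{|V|,\Phi,\tc\}$ let $\Delta X$ abbreviate $X(G)-X(G')$. The one structural fact used repeatedly is that an ordinary vertex $v$ lies in no bad cut-set of $G$: by construction $v\in V(H)\setminus V(C)$, and the third condition in the definition of a good subgraph forbids any vertex of $V(H)\setminus V(C)$ from lying in a bad cut-set of $G$ (and if $H=G$ then $G$ has no bad cut-set at all, so the statement is again immediate).

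For $(\Delta\Gamma)$ I would just subtract the defining formula for $\Gamma(G')$ from that for $\Gamma(G)$; since $\Gamma$ is an affine combination of $|V|$, $\Phi$, and $\tc$ with coefficients $\tfrac1{12},\tfrac1{36},\tfrac1{18}$, the asserted identity drops out at once. For $(\Delta\Phi)$ I would split the change of $\Phi=\sum_u(\deg(u)-5)$ into the two contributions used in items~2a and~2b of the proof of Lemma~\ref{lemaexample}. Deleting the summands indexed by $u\in R$ contributes $\sum_{u\in R}(\deg_G(u)-5)=\Sigma_d-5|R|$, where $\Sigma_d:=\sum_{u\in R}\deg_G(u)$; and each $u\notin R$ loses exactly $|N_G(u)\cap R|$ from its degree, so the second contribution is $\sum_{u\notin R}|N_G(u)\cap R|=\Sigma_d-2\Sigma_e$, where $\Sigma_e:=|E(G[R])|$. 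Hence $\Delta\Phi=2\Sigma_d-5|R|-2\Sigma_e$. Now $|R|=b_5+b_6+\cdots+b_{10}$ and, by the meaning of the $b_i$ (with $\text{mindeg}\ge10$ in the last bin), $\Sigma_d\ge 5b_5+6b_6+7b_7+8b_8+9b_9+10b_{10}$; substituting gives $\Delta\Phi\ge 5b_5+7b_6+9b_7+11b_8+13b_9+15b_{10}-2\Sigma_e$.

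For $(\Sigma_e)$ I would combine planarity with ``$v$ lies in no bad cut-set''. If neighbours $v_i,v_j$ of $v$ satisfy $v_iv_j\in E$, then $vv_iv_j$ is a triangle; not being a triangle-cut, one of its two sides is vertex-free, so it bounds a triangular face, and the converse is trivial. If a distance-$2$ vertex $v_k'$ (so $vv_k'\notin E$) were adjacent to three neighbours $a,b,c$ of $v$, then either some pair among $a,b,c$ is non-adjacent, in which case the resulting chordless $4$-cycle through $v$ is genuinely separating (minimum degree $5$ pushes a vertex into its interior while the third common neighbour stays outside), or $\{v,v_k'\}\cup\{a,b,c\}$ spans $K_5$ minus the edge $vv_k'$, whose essentially unique plane embedding together with $\deg_G(v)\ge5$ forces a triangle-cut through $v$; either way $v$ is in a bad cut-set, a contradiction. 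Finally, if $v_k'$ is adjacent to exactly two neighbours $v_i,v_j$ of $v$, the only possible chord of $vv_iv_k'v_j$ is $v_iv_j$: if $v_iv_j\notin E$ the cycle is chordless and, not being a cut, bounds the $4$-face $vv_iv_k'v_j$; if $v_iv_j\in E$ then $vv_iv_j$ is a triangular face as above, and the same local picture forces $v_k'v_iv_j$ to be a triangular face too. For $(\tc)$ I would replay item~3 of the proof of Lemma~\ref{lemaexample}: a component of $G'$ that is a tree contains a vertex $w$ with $\deg_{G'}(w)\le1$, hence with at least four neighbours in $R$ since $\deg_G(w)\ge5$; as $m_2\le1$, at most one of those neighbours is at distance $2$ from $v$ and at most one is $v$ itself, so at least two of them are neighbours of $v$, and together with $v$ (and one further neighbour of $v$ if $w$ happens to be adjacent to $v$) they span a triangle or a chordless $4$-cycle through $v$ that is a bad cut-set of $G$ --- contradicting that $v$ is ordinary. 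So no new tree component is created.

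The bookkeeping in $(\Delta\Phi)$ and $(\Delta\Gamma)$ is routine. The delicate step --- already met in Lemma~\ref{lemaexample} --- is confirming in $(\Sigma_e)$ and $(\tc)$ that the triangles and $4$-cycles produced are honest cut-sets, i.e.\ that both of their sides carry vertices; this is exactly where the minimum degree $5$ of $G$ and the chosen plane embedding are used, and where the bulk of the case checking lives.
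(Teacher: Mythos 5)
Your proof is correct and follows essentially the same route as the paper's (which is itself terse, mostly pointing back to the proof of Lemma~\ref{lemaexample}): you identify the same structural fact that an ordinary vertex lies in no bad cut-set of $G$, and you reduce $(\Delta\Phi)$, $(\Delta\Gamma)$ to the bookkeeping already done there. Your treatment of $(\Delta\Phi)$ is slightly more explicit than the paper's, deriving the exact identity $\Delta\Phi = 2\Sigma_d - 5|R| - 2\Sigma_e$ and then lower-bounding $\Sigma_d$; and you correctly flag that the remaining content of $(\Sigma_e)$ and $(\tc)$ is the planarity/minimum-degree argument showing that the produced triangle or chordless $4$-cycle through $v$ genuinely separates $G$, which is exactly what the paper relies on (at the same level of hand-waving) in part~3 of Lemma~\ref{lemaexample} and in its one-line proof of this lemma.
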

\begin{proof}
Statement ($\Sigma_e$) is implied by the absence of bad cuts containing $v$; if $v_k'$ is adjacent to both $v_i$ and $v_j$ and $v_iv_j\in E(G)$, then $v_iv_jv_k'$ is a triangular face (for otherwise the edge $v_iv_j$ would be contained in a triangle-cut in $G$ and $v$ would be extraordinary).
Statement ($\Delta\Phi$) is implied by the definitions of $\Phi$ and $\text{\bf b}$.
Statement ($\tc$) is implied by the argument from the proof of Lemma \ref{lemaexample} (enumeration of the change of $\Gamma$, part 3).
Statement ($\Delta \Gamma$) follows from the definition of $\Gamma$.
\end{proof}

To make our computations easier to follow we will present the key
points in a concise form (the following example captures the computation used in proving Lemma~\ref{lemaexample}):
\begin{eqnarray*}
&\text{Delete}(v) \ \ \ \text{Collect}(v_1, v_2, v_3, v_4, v_5, v_6, v_7, v_8)
&\\
&\Delta |V|=9 \ \ \ \text{\bf b}=(8,0,0,1,0,0) \ \ \ \Sigma_e\le16 \ \ \ \Delta
\Phi\ge 19 \ \ \ \Delta\tc=0 \ \ \ \Delta \Gamma \ge 23/18.&
\end{eqnarray*}
Whenever possible, the order of the collected vertices will match the order in which
these vertices can be collected.

All the computations except for the computation of $\Sigma_e$ are entirely routine.
If Lemma \ref{lemacomp} cannot be used or a further clarification is needed, we signalize it by a star. The explanation will follow in square brackets.

\section{Charges on vertices of degree $6$ and more}

This section shows that no ordinary $6^+$-vertex may have
positive charge. Since the initial charge of any such vertex is not positive, its positive charge can only be obtained
during the discharging phase. First, we determine the  maximum charge a
vertex can obtain via distance discharging in Step 2 of the discharging procedure described in Section~\ref{sec:procedure}.

\begin{lema}\label{l7}
If a vertex $w$ of degree $k \ge 7$ has at least $m$ neighbours
of degree $5$, then $w$ can receive charge at most
$\lfloor (k-m-1)/3 \rfloor /5$ for $m>0$
and at most  {$\lfloor  k/3\rfloor/5$} for $m=0$ via distance discharging.
\end{lema}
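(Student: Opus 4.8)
The plan is to argue locally around $w$: I would count, in the cyclic arrangement of the neighbours and faces at $w$, how many copies of the configuration of Figure~\ref{fdd} can push charge into $w$, and show that this number is at most $\lfloor (k-m-1)/3\rfloor$ when $m>0$ and at most $\lfloor k/3\rfloor$ when $m=0$; multiplying by $1/5$ then gives the lemma.

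First I would normalise the picture from the point of view of $w$. Fix the plane embedding and list the neighbours of $w$ cyclically as $u_1,\dots,u_k$, writing $f_i$ for the face incident to $w$ that lies between $u_i$ and $u_{i+1}$ (indices modulo $k$). As already noted in the proof of Lemma~\ref{lemanoext}, any copy of the configuration of Figure~\ref{fdd} that sends $1/5$ to $w$ forces two neighbours of $w$ to have degree exactly $6$; inspecting Figure~\ref{fdd} shows that these are two \emph{consecutive} neighbours $u_i,u_{i+1}$, and that the copy is then determined by $f_i$ together with $u_i,u_{i+1}$. Call such an $f_i$ \emph{active}. Then the charge received by $w$ in Step~2 is at most $1/5$ times the number of active faces at $w$, and it remains to bound that number.

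The second step is a packing estimate around $w$. A neighbour of $w$ of degree $5$ — there are at least $m$ of them — cannot be one of the two degree-$6$ vertices of any copy, hence is incident to no active face. I would then show, from the precise shape of Figure~\ref{fdd} together with the properties of the minimal counterexample (minimum degree $5$; no triangle-cut or chordless quadrilateral-cut through $w$), that distinct active faces are always separated by at least two inactive faces around $w$ — so that each copy occupies a block of three consecutive slots of the cyclic sequence $f_1,\dots,f_k$ and distinct copies occupy disjoint blocks. For $m=0$, a cyclic packing of width-$3$ blocks into $k$ slots gives at most $\lfloor k/3\rfloor$ active faces. For $m\ge 1$, the $m$ degree-$5$ neighbours break the cycle into arcs and the packing becomes linear; summing $\lfloor\ell_i/3\rfloor$ over the arcs, whose lengths total at most $k-m$, and using that a degree-$5$ neighbour also forbids a block straddling it, gives at most $\lfloor (k-m-1)/3\rfloor$ active faces. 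This yields the two bounds.

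The main obstacle I anticipate is the spacing claim: proving that a single copy of Figure~\ref{fdd} really blocks out three consecutive slots at $w$, not two — i.e., ruling out that two copies share their degree-$6$ vertices or sit in consecutive positions without creating one of the already-forbidden substructures (a bad cut-set through $w$, a vertex of degree less than $5$, or a collectible vertex). That is a finite configuration check leaning on the reductions of the earlier sections. Once the spacing is in hand, the remaining work — splitting into the cyclic case $m=0$ and the linear case $m\ge1$ and doing the arithmetic with the floor functions — is routine.
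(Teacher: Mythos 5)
Your plan is essentially the paper's proof. Each distance discharge into $w$ comes from a $4$-vertex path of consecutive neighbours $(v_1,v_2,v_3,v_4)$ around $w$ with degrees $6^+,6,6,6^+$, and the whole lemma reduces to packing such paths around $w$; the paper does exactly this, only counting neighbour-vertices rather than the faces between them.

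One adjustment to flag in your ``anticipated obstacle'': the spacing claim does not need the bad-cut machinery or any reduction from the later sections. The configuration of Figure~\ref{fdd} already forces the two middle vertices $v_2,v_3$ to have $v$ as their \emph{only} $5$-neighbour, so $v_2$ and $v_3$ cannot occur anywhere in a second such path; only the end vertices $v_1,v_4$ can be shared between two consecutive copies. That single observation gives your separation-by-two immediately. With it in hand, $i$ paths occupy at least $3i$ neighbours of $w$ from $V_{6+}$ in the cyclic case, and at least $3i+1$ of them once a $5$-neighbour breaks the cycle; since $w$ has at most $k-m$ neighbours in $V_{6+}$, both stated bounds follow. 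If you insist on counting active faces rather than vertices you can still make the arithmetic close, but be aware that each $5$-neighbour actually removes four consecutive face-slots, not two, so the arc-splitting bookkeeping is a little more delicate than your sketch suggests; the vertex count is the cleaner route.
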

\begin{proof}
If a vertex $w$ of degree $k \ge 7$ receives charge $1/5$ via distance
discharging, it must have four consecutive neighbours forming a path  {$(v_1,v_2,v_3,v_4)$ of degrees $6^+,6,6,6^+$, respectively.}
{Moreover, $v_2$ and $v_3$ have a common $5$-neighbour,  the vertex which sends charge $1/5$ to $w$.}

Note that $v_2$ and $v_3$ cannot be contained in any other path of this type:
they have only one $5$-neighbour.
The vertices $v_1$ and $v_4$ can be shared by two paths of this type around   {$w$}.
Consequently, {$w$} must have at least $3i$ neighbours from $V_{6+}$ to get
charge $i/5$ via distance discharging. For $m > 0$,   {$w$} must have at least $3i+1$ such neighbours.
This last two statements imply the lemma.
\end{proof}

\begin{corollary}\label{easytypes}
Vertices of types $10a$, $9a$, $9b$, $9c$, $8a$, $8b$, $8c$, $8d$, $7a$, $7b$,
$7c$, $6a$ and $6b$
cannot have positive charge.
\end{corollary}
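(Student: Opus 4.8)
The plan is to verify Corollary~\ref{easytypes} by a direct case analysis over the listed types, bounding from above the total charge each such vertex $w$ can acquire during the discharging procedure and showing it is strictly less than $\deg_G(w) - 6$. Recall that the initial charge of a $d$-vertex is $6 - d \le 0$ for $d \ge 6$, so $w$ can only become positive if it receives more than $\deg_G(w)-6$ in Steps~2 and~3 (Step~1 only moves charge \emph{away} from vertices to faces). Thus for each type I would compute the sum of the distance-discharging bound from Lemma~\ref{l7} and the maximum Step~3 contribution, which is at most $n_5 \cdot \mc$ where $n_5$ and $\mc$ are read off from Table~\ref{figt} (with the refinement that a vertex of type $9c$ or $8c$ receives at most $1 + 2 \cdot 9/10 = 14/5$ from its three consecutive $5$-neighbours).

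First I would handle the types with few $5$-neighbours. For $10a$: Lemma~\ref{l7} gives at most $\lfloor(10-3-1)/3\rfloor/5 = 2/5$ via distance discharging and at most $3$ from Step~3, a total of $17/5 < 4 = \deg - 6$. For $9a,9b,9c$: the needed surplus is $3$; type $9a$ has one non-triangular face, which kills at least one potential $5$-neighbour slot and also forces the configuration, so $w$ gets at most $3 + \lfloor(9-3-1)/3\rfloor/5 = 3 + 1/5$ — here the point of the ``$n_\nontr \ge 1$'' in the table is precisely that the charge sent by the incident $5$-vertices is reduced, and I would need to check carefully that the face-contribution of Step~1 together with the reduced $\mc$-values brings the total strictly below $3$; similarly $9b$ has only two $5$-neighbours, so at most $2\cdot 1 + \lfloor 9/3\rfloor /5 = 2 + 3/5 < 3$ (using $m=0$ in Lemma~\ref{l7}), and $9c$ gets at most $14/5 + \lfloor(9-3-1)/3\rfloor/5 = 14/5 + 1/5 = 3$, which is not strictly less — so for $9c$ I must argue the distance-discharging bound cannot be attained simultaneously with three consecutive $5$-neighbours, or that $\mc$ is not simultaneously maximal, a point worth a sentence of care.

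Then I would dispatch the remaining types the same way: $8a$ needs surplus $2$ and has one $5$-neighbour, giving at most $1 + \lfloor 8/3\rfloor/5 = 1 + 2/5 < 2$; $8b$ and $8d$ have two $5$-neighbours with $\mc \le 1$ and $\mc = 9/10$ respectively, giving at most $2 + \lfloor(8-2-1)/3\rfloor/5 = 2+1/5$ for $8b$ and $9/5 + \cdots < 2$ for $8d$ — again for $8b$ I use that the single non-triangular face ($n_\nontr\ge 1$) cuts into the $5$-vertex contributions; $8c$ gets at most $14/5 + 1/5 = 3 < 4$ wait, $\deg - 6 = 2$ here, so $8c$ needs a more delicate argument exploiting $n_\nontr \ge 2$ to reduce the effective $5$-neighbour charge below $2$, and this is the one place I expect real work. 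For degree $7$: $7a$ ($\mc = 4/5$, one $5$-neighbour) gets at most $4/5 + \lfloor 7/3\rfloor/5 = 4/5 + 2/5 = 6/5 > 1$ — so again $n$ must be used, or more precisely $7a$ has no non-triangular face constraint but a small $\mc$; I'd need to check whether distance discharging and the $4/5$ can coexist. Types $7b$ ($13/20$, two $5$-nbrs, one non-tr face), $7c$ ($2/5$, two $5$-nbrs), $6a$ ($2/5$, one $5$-nbr, one non-tr face) and $6b$ ($0$) are then straightforward arithmetic: e.g. $6a$ gets at most $2/5$ plus (by Lemma~\ref{l7} with $k=6$, which actually doesn't apply since $k\ge 7$ is required, so $6$-vertices get \emph{zero} from distance discharging) just $2/5 < 0$? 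No — $\deg - 6 = 0$ for a $6$-vertex, so I need the total received to be strictly less than $0$, i.e.\ exactly $0$ won't do; here I'd use that a $6$-vertex also \emph{sends} $2/5$ to each incident non-triangular face in Step~1, and type $6a$ has one, so its net change is at most $2/5 - 2/5 = 0$, and I must locate where strictness comes from (the $5$-neighbour sending $2/5$ rather than a full $\mc$, or the face absorbing slightly more).

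The main obstacle, as flagged above, is the types where the crude bound (Lemma~\ref{l7} distance bound plus $n_5 \cdot \mc$) lands exactly at or just above the threshold $\deg - 6$ — namely $9c$, $8c$, $7a$, and arguably $6a$, $6b$. For these I would argue that the extremal distance-discharging configuration (paths $6^+,6,6,6^+$ around $w$) cannot be realized simultaneously with $w$ having its full quota of charge-$\mc$ $5$-neighbours, because a $6$-neighbour participating in a distance-discharging path occupies a position on the wheel around $w$ that would otherwise be needed for a $5$-neighbour; more precisely, each unit $1/5$ of distance charge consumes three consecutive $6^+$-neighbours, at least two of which (the inner $v_2,v_3$) are $6$-vertices that are therefore \emph{not} $5$-vertices, so the true bound is something like $n_5 \cdot \mc + \min(\lfloor(k - n_5 - 1)/3\rfloor, \text{geometric constraints})/5$ and this combined quantity is strictly below $\deg - 6$ in every case. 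Since the corollary states these vertices ``cannot have positive charge'', a sentence recording that these arithmetic checks are routine once the interaction is accounted for — deferred to the reader as the paper's ``star'' convention suggests — would complete the argument.
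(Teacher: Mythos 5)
Your overall plan is the paper's plan: bound the charge received via Lemma~\ref{l7} and Table~\ref{figt}, and account for charge sent to non-triangular faces in Step~1 when $n_{\nontr}\ge 1$. The paper sketches exactly this for type 7a and declares the other cases analogous. However, your write-up contains two errors that together lead you to flag ``real work'' where none is needed.

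First, you repeatedly require the received charge to be \emph{strictly} less than $\deg(w)-6$. This is a misreading of the goal: ``cannot have positive charge'' means the final charge is $\le 0$, not $<0$. So for type $9c$, the bound $14/5+1/5 = 3$ against the threshold $3$ is already a complete proof (final charge $\le 0$), and the sentence of ``care'' you say is needed is not. The same misreading generates your concerns about $6a$ and $6b$, where a net change of $0$ is perfectly fine.

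Second, for type $7a$ you misapply Lemma~\ref{l7}. If $w$ has a $5$-neighbour then $m\ge 1$, and the lemma gives $\lfloor(7-1-1)/3\rfloor/5 = 1/5$, not $\lfloor 7/3\rfloor/5 = 2/5$; the latter is the $m=0$ bound, which is only relevant in the separate (easier) case where $w$ has no $5$-neighbour. The correct computation $4/5+1/5=1$ exactly matches the initial deficit of $1$, so the final charge is $\le 0$ and the case is closed. The ``geometric interference'' between distance-discharging paths and $5$-neighbour slots that you propose to argue is precisely what Lemma~\ref{l7} already encodes via the term $k-m-1$; no additional geometric reasoning is required.

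Finally, for $8c$ you are right that something extra is needed beyond the raw bound $14/5+1/5=3$ versus the deficit $2$, but the right extra ingredient is the Step~1 send, not a reduction of the $5$-neighbour contributions: type $8c$ has $n_{\nontr}\ge 2$, and any $8$-vertex sends at least $1/2$ to each incident $4^+$-face, contributing a deduction of at least $1$, so the net is $-2+14/5+1/5-1=0\le 0$. You invoke Step~1 correctly for $9a$ and $6a$; carrying it through for $8b$ and $8c$ in the same bookkeeping way closes those cases without any case analysis of the wheel geometry. With the strictness misreading corrected and Lemma~\ref{l7} applied with the right $m$, your approach becomes the paper's proof.
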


\begin{proof}
We sketch the proof for a vertex $w$ of type 7a; the other cases are proved very similarly
(we need to employ $n_\nontr$ and look also at Step~1 in some of them).
The initial charge of $w$ is $6-7 = -1$.

If $w$ has a $5$-neighbour, it receives charge at most $4/5$
in Step~3 of the discharging procedure because it has only one $5$-neighbour
and that neighbour can send at most $\mc(w)$ {to $w$}.   {By} Lemma~\ref{l7}, it can receive
at most $\lfloor 5/3\rfloor/5 = 1/5$ in Step~2. Altogether,
  {the total amount of charge} $w$ receives is at most $4/5 + 1/5 = 1$, and so its final charge is not positive.

If $w$ has no $5$-neighbour, it receives nothing in Step~3
and at most $\lfloor 7/3\rfloor/5 = 2/5$ in Step~2 according to Lemma~\ref{l7}.
Thus $w$ has negative final charge.
\end{proof}

\begin{lema}\label{lema8}
The graph $G$ has no ordinary $8^+$-vertex with positive charge.
\end{lema}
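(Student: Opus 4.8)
\textbf{Proof plan for Lemma~\ref{lema8}.}

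The plan is to rule out, one type at a time, every possibility for an ordinary $8^+$-vertex $w$ to end up with positive final charge. By Corollary~\ref{easytypes}, the only types that survive the easy counting are $10b$, $9d$, $8e$; so I would fix $w$ to be of one of these three types and show that the combinatorial consequences of $\ch(w)>0$ force a reducible configuration in $G$, contradicting minimality. For each type, the starting point is the charge budget: $w$ has initial charge $6-\deg_G(w)$, it can receive at most $\lfloor k/3\rfloor/5$ (or $\lfloor (k-m-1)/3\rfloor/5$ when it has $m\ge 1$ five-neighbours) via Step~2 by Lemma~\ref{l7}, and it can receive at most $\mc(v,w)\le 1/2$ from each $5$-neighbour $v$ in Step~3 (since these are exactly the small-$\mc$ types in Table~\ref{figt}). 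Combining these bounds with $\deg_G(w)$ pins down that $w$ must have many $5$-neighbours, that most or all faces around $w$ are triangles, and that the $5$-neighbours must send essentially their full allowance, which in turn forces structural information about the $5$-neighbours (e.g.\ they have no spare charge, so all their incident faces are triangles and they have no other $6^+$-neighbour competing for charge, or the distance-$2$ discharging into $w$ is absent).

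Once the local picture around $w$ is fixed, I would carry out the reduction: delete $w$ and collect a suitable set of $5$-neighbours $v_1,\dots,v_{m_1}$ of $w$ (and possibly one distance-$2$ vertex $v_1'$), always collecting at least six vertices, producing $G'$ with fewer vertices. The bookkeeping is exactly that of Lemma~\ref{lemacomp}: $\Delta|V|$ equals one plus the number of collected vertices; $\mathbf b$ records the guaranteed minimum degrees; $\Sigma_e$ is bounded above using ($\Sigma_e$) of Lemma~\ref{lemacomp} (two neighbours of $w$ are joined only along a triangular face, and each distance-$2$ vertex hits at most two neighbours of $w$), and crucially $\Sigma_e$ is bounded by counting the $\deg_G(w)$ spokes plus the $\le \deg_G(w)$ triangle edges among the $v_i$, with any extra edge producing a triangle-cut or chordless quadrilateral-cut through $w$ and thus contradicting that $w$ is ordinary; then $\Delta\Phi\ge 5b_5+7b_6+\dots-2\Sigma_e$ and $\Delta\tc=0$ by ($\tc$) when we collect at most one distance-$2$ vertex. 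Finally $\Delta\Gamma = \Delta|V|/12+\Delta\Phi/36+\Delta\tc/18 \ge 1$, which contradicts that $G$ is a counterexample; hence no such $w$ exists. I would present these case computations in the concise form introduced after Lemma~\ref{lemacomp}, flagging with a star any case where $m_2\ge 2$ forces a separate argument that no tree component is created (the usual chordless-quadrilateral argument: a would-be degree-$\le 1$ vertex of a new tree component has $\ge 4$ neighbours among the removed vertices, three of which are neighbours of $w$, yielding a bad cut through $w$).

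The main obstacle will be the bound on $\Sigma_e$ and, relatedly, choosing which six-plus vertices to collect so that the guaranteed minimum degrees in $\mathbf b$ are large enough to make $\Delta\Phi\ge 36\cdot(1 - \Delta|V|/12) - 2\Sigma_e$ hold. For type $8e$ with eight $5$-neighbours and all faces triangles this is the already-worked case of Lemma~\ref{lemaexample} ($\mathbf b=(8,0,0,1,0,0)$, $\Sigma_e\le 16$, $\Delta\Gamma\ge 23/18$); but for types $9d$ and $10b$, and for $8e$ with fewer than eight $5$-neighbours or with some non-triangular incident face, I must check that the charge constraint still forces enough collectible low-degree vertices and few enough internal edges. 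The delicate point is that $w$ itself may be collectible after deletion of a neighbour, or a $5$-neighbour may share its two incident triangles with two other $5$-neighbours, so the adjacency graph among $\{w,v_1,\dots\}$ can be denser than a fan; bounding $\Sigma_e$ there again reduces to the ``no bad cut through $w$'' principle, but the case split on how the triangles around $w$ are arranged is where the real work lies.
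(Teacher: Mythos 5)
Your plan is structured around the same framework the paper uses (Corollary~\ref{easytypes} plus Lemma~\ref{l7} to narrow the type, then a Lemma~\ref{lemacomp}-style reduction), but it has a genuine gap exactly where the real work of the lemma is.

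Your main reduction template --- ``delete $w$ and collect a suitable set of $5$-neighbours of $w$, always collecting at least six vertices'' --- only produces six collectible vertices when $w$ has at least six $5$-neighbours. The paper first disposes of that situation in a single stroke (for any of $10b$, $9d$, $8e$): delete $v$, collect six of its $5$-neighbours, giving $\DG \ge 10/9$. It then shows by a short charge computation that a $10b$- or $9d$-vertex with $\ch>0$ \emph{must} have at least six $5$-neighbours (for $9d$ or $10b$ with five $5$-neighbours, the final charge is $6-\deg(v)+2.5+\lfloor(\deg(v)-6)/3\rfloor/5<0$), so only $8e$ with exactly four or five $5$-neighbours survives. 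In those surviving cases your template breaks: deleting the $8$-vertex only frees four or five vertices. The paper instead deletes a \emph{different} vertex $w$, a $6^+$-neighbour of $v$ sitting next to the block of $5$-neighbours, and then collects the $5$-neighbours, then $v$ itself, and (in the four-$5$-neighbour case) the two degree-$6$ vertices forced by the saturated distance discharging. You do flag this possibility (``$w$ itself may be collectible after deletion of a neighbour'') and correctly label it as the delicate point, but you never specify which vertex is deleted, which $6$-vertices become collectible, or why the charge saturation pins down the local structure enough to find such a $w$ --- so the argument cannot close as written. To complete the proof you should: (a) do the one-shot reduction for $\ge 6$ five-neighbours; (b) verify by arithmetic that $10b$ and $9d$ are thereby eliminated; (c) for $8e$ with exactly four $5$-neighbours, use the fact that $\ch(v)>0$ forces distance discharging, hence four consecutive $6^+$-neighbours with two $6$-vertices among them, all faces around $v$ triangular, and then delete the common neighbour of $v$, $v_1$, and one of those $6$-vertices; (d) for $8e$ with exactly five $5$-neighbours, use that at most one face around $v$ is non-triangular to locate a suitable deletion target among the $6^+$-neighbours.
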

\begin{proof}
 {Assume to the contrary that $v$ is an ordinary $8^+$-vertex with positive final charge. By Corollary \ref{easytypes},}   $v$ {is}  of type $10b$, $9d$ or $8e$.
If $v$ has at least six $5$-neighbours, {say $v_1, \dots , v_6$,}   {then we delete $v$ and collect $v_1, \dots , v_6$.}
For the purpose of counting $\text{\bf b}$, the lower bound $\text{mindeg}(v)$
on the degree of $v$ will be $8$.
\begin{eqnarray*}
&\text{Delete}(v) \ \ \ \text{Collect}(v_1, v_2, v_3, v_4, v_5, v_6) &\\
&\Delta |V|=7 \ \ \ \text{\bf b}=(6,0,0,1,0,0) \ \ \ \Sigma_e\le 11 \ \ \ \Delta
\Phi\ge 19 \ \ \ \Delta\tc=0 \ \ \ \Delta \Gamma \ge 10/9.&
\end{eqnarray*}

We prove that if $v$ is of type $10b$ or $9d$ {and $\ch(v) >0$}, then it must have at least six $5$-neighbours.
Indeed, if $v$ has exactly five $5$-neighbours, then it received charge at most $2.5$ from them.
Additional charge might have been received by distance discharging, but its amount is bounded from above by
$\lfloor (\deg(v)-5-1)/3 \rfloor /5$ due to Lemma~\ref{l7}. The initial charge of $v$ is $6-\deg(v)$, so its final charge
$6-\deg(v) + 2.5 + \lfloor (\deg(v)-5-1)/3 \rfloor /5$ is negative.
If $v$ has at most four $5$-neighbours, then $v$ receives charge at most $2$ from   {these} neighbours and
at most $\deg(v)/15$ by distance discharging (we used Lemma~\ref{l7} again).
The final charge $6-\deg(v) + 2 + \deg(v)/15$ is therefore negative.

The last two paragraphs show that $v$ is of type $8e$ and has at most five $5$-neighbours.
Since $\ch(v) > 0$, it is straightforward to verify that $v$ has at least four $5$-neighbours.

  {If} $v$   {has exactly}  four $5$-neighbours, {say $v_1, \dots, v_4$},
{then $v$ must receive charge $1/5$ from some vertex $u$ via distance charging. Hence
$v$ has four consecutive $6^+$-neighbours, including two $6$-vertices $v_5$, $v_6$ adjacent to $u$.
Moreover, all the faces incident to $v$ are triangular faces.
So $v_1, \ldots, v_4$ are consecutive neighbours of $v$ that form a path.}
Let $w$ be  {the common neighbour of $v$, $v_1$, and $v_5$.}
\begin{eqnarray*}
&\text{Delete}(w) \ \ \ \text{Collect}(v_1, v_2, v_3, v_4, v, v_5, v_6) &\\
&\Delta |V|=8 \ \ \ \text{\bf b}=(4,3,0,1,0,0) \ \ \ \Sigma_e\le 13 \ \ \ \Delta
\Phi\ge 26 \ \ \ \Delta\tc=0 \ \ \ \Delta \Gamma \ge 25/18.&
\end{eqnarray*}

 {Assume} $v$   has exactly five $5$-neighbours, {say $v_1, \dots v_5$}.
Only one face around $v$ can be non-triangular and therefore
we can always find a neighbour $w \not \in \{v_1, v_2, \dots, v_5 \}$ of $v$ that allows us to collect three
vertices from $\{v_1, v_2, \dots, v_5 \}$, which eventually allows us to collect $v$
and the remaining vertices from $\{v_1, v_2, \dots, v_5 \}$.
\begin{eqnarray*}
&\text{Delete}(w) \ \ \ \text{Collect}(v_1, v_2, v_3, v_4, v_5, v) &\\
&\Delta |V|=7 \ \ \ \text{\bf b}=(6,0,0,1,0,0) \ \ \ \Sigma_e\le 11 \ \ \ \Delta
\Phi\ge 19 \ \ \ \Delta\tc=0 \ \ \ \Delta\Gamma \ge 10/9.&
\end{eqnarray*}
\end{proof}

%7

\begin{lema}\label{lema6}
The graph $G$ has no ordinary $6^+$-vertex with positive charge.
\end{lema}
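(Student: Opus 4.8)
By Corollary~\ref{easytypes} together with Lemma~\ref{lema8}, an ordinary $6^+$-vertex with positive final charge can only be a $7$-vertex of type $7d$; let $v$ be such a vertex and let $m$ denote the number of its $5$-neighbours. The initial charge of $v$ is $6-7=-1$, so $v$ must receive more than $1$ during discharging. Since $\mc(u,v)=1/3$ for every $5$-neighbour $u$ of $v$, Step~3 sends at most $m/3$ to $v$; by Lemma~\ref{l7}, Step~2 sends at most $1/5$ when $m=3$ and nothing when $m\ge 4$; and Step~1 only removes charge from $v$, at least $1/2$ for each incident non-triangular face. I would first extract the structural consequences: the type condition forces $m\in\{3,4,5,6,7\}$; if $v$ lies on a non-triangular face then $m\ge 5$, and then at most one face at $v$ is non-triangular; and if $m=3$ then $v$ receives exactly $1/5$ via distance discharging, which by the argument in the proof of Lemma~\ref{l7} (see Figure~\ref{fdd}) forces the four $6^+$-neighbours of $v$ to be consecutive, the middle two of them to have degree exactly $6$ and to share a common $5$-neighbour, the three $5$-neighbours of $v$ to be consecutive, and every face at $v$ to be a triangle. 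I would also record that a $5$-neighbour $u$ of $v$ feeds its incident faces, its distance-$2$ partner, and all of its other $6^+$-neighbours before reaching $v$ in the Step~3 ordering (because $\mc(u,v)=1/3$ is the least nonzero value of $\mc$), so in the tight cases the other $6^+$-neighbours of $u$ must be of light type; this rules out several sub-configurations at once.

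The bulk of the argument is a case analysis on $m$, each case closed by exhibiting a reduction and checking $\Delta\Gamma\ge 1$ via Lemma~\ref{lemacomp}. For $m=7$ with every face at $v$ triangular, I would delete one $5$-neighbour of $v$ and collect the remaining six $5$-neighbours in turn along the triangular fan (each drops to degree at most $4$ when its predecessor is removed) and then $v$ itself, removing eight vertices with $\text{\bf b}=(7,0,1,0,0,0)$, $\Sigma_e\le 14$, hence $\Delta\Phi\ge 16$, $\Delta\tc=0$ and $\Delta\Gamma\ge 10/9$. For $m\in\{6,7\}$ with one non-triangular face at $v$ the same fan reduction collects six vertices, with $\text{\bf b}=(6,0,1,0,0,0)$, $\Sigma_e\le 11$, $\Delta\Phi\ge 17$ and $\Delta\Gamma\ge 38/36$. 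For $m\in\{4,5\}$ I would delete $v$ (or a suitable interior vertex of the $5$-fan) and collect the $5$-neighbours of $v$ together with one more vertex: either a $6$-vertex neighbour of $v$ that has lost two edges to the removed set, or a vertex of degree at most $6$ at distance $2$ from $v$ lying on a triangle glued to the fan. Statement $(\Sigma_e)$ of Lemma~\ref{lemacomp} controls which second neighbours are available, and since at most one distance-$2$ vertex is collected, statement $(\tc)$ of Lemma~\ref{lemacomp} gives $\Delta\tc=0$. Finally, for $m=3$: if one of the two extreme $6^+$-neighbours of $v$ has degree $6$, the whole fan can be collected as before; otherwise I would use the structure forced by distance discharging --- the degree-$6$ neighbours $x_2,x_3$ of $v$, their common $5$-neighbour, and the three consecutive light $5$-neighbours of $v$ --- to build a reduction that collects six vertices reaching one step beyond the neighbourhood of $v$. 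In every case $\Delta\Gamma\ge 1$, which contradicts the minimality of $G$.

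The delicate part is the small-$m$ range, and $m=3$ in particular (together with the tightest instances of $m=4$ and $m=5$): there the positive-charge hypothesis is only barely satisfied, the fan around $v$ does not by itself supply six collectible vertices, and the reduction must reach into the second neighbourhood of $v$. Controlling the degrees there --- so as to verify the bound on $\Delta\Phi$ and the absence of new tree components --- is exactly where the no-bad-cut conditions (statement $(\Sigma_e)$ of Lemma~\ref{lemacomp}) and the restrictions on how the light $5$-neighbours of $v$ distribute their charge have to be combined. Once a reduction is fixed, the arithmetic for each configuration is routine and is recorded in the concise format introduced after Lemma~\ref{lemacomp}.
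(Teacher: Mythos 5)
Your overall strategy is the same as the paper's: reduce to a $7$-vertex of type $7d$ via Corollary~\ref{easytypes} and Lemma~\ref{lema8}, split on the number $m\ge 3$ of $5$-neighbours, use the discharging rules to force structure, and close each case by a reduction with $\Delta\Gamma\ge 1$. Your structural observations (no non-triangular face when $m\le 4$, the forced diamond when $m=3$, the two-neighbour constraint from $(\Sigma_e)$) are all correct and match the paper.

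The gap is in the $m=4$ case, which is where the paper spends most of its effort. Your sketched reduction --- delete $v$ (or an interior vertex of the $5$-fan) and collect the four $5$-neighbours plus one more vertex --- does not work in all of the four configurations of the $5$-neighbours around $v$ (Figure~\ref{fig:7}). Concretely, take the configuration where $v_1,\dots,v_4$ are consecutive $5$-neighbours of $v$ and the two $6^+$-neighbours $v_5,v_7$ flanking the fan both have degree at least $7$. Deleting $v$ (or $v_2$, say) and collecting the $5$-neighbours removes only five vertices, because after these removals $v_5$ and $v_7$ still have degree at least $5$; the resulting $\Delta|V|=5$ with $\text{\bf b}=(4,0,1,0,0,0)$, $\Sigma_e\le 7$ gives $\Delta\Phi\ge 15$ and only $\Delta\Gamma\ge 5/6<1$. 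The paper closes these sub-cases by a genuinely different move: it either deletes a $6^+$-neighbour of $v$ (e.g.\ $v_5$ or $v_7$), or deletes a distance-$2$ vertex that is a common neighbour of $v_1$ and $v_7$, or shows directly that the degree constraints on $v_7$ (together with $\mc(v_1,v_7)\ge 4/5$ and the behaviour of $v_1$ on a non-triangular face) force $v_1$ to discharge completely into $v_7$ and hence $\ch(v)\le 0$. Your plan gestures at this last mechanism ("tight cases... rules out several sub-configurations") but does not identify which $m=4$ sub-cases are killed by charge bookkeeping versus which need a reduction deleting a $6^+$-neighbour or a second-neighbourhood vertex; that dichotomy, carried out for all four arrangements of the $5$-fan, is exactly the missing content.
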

\begin{proof}
For a contradiction, assume that $v$ is such a vertex.
The vertex $v$ must be of type $7d$ and have at least three $5$-neighbours due to Lemma~\ref{lema8} and Corollary~\ref{easytypes}.
Let $m\ge 3$ be the number of $5$-neighbours of $v$.

\medskip
\noindent{\bf Case $m=3$:}
The vertex $v$ must be only on triangular faces and receives charge by distance
discharging from some vertex $x$. {So the three $5$-neighbours of $v$, say $v_1, v_2, v_3$, are consecutive and form a path.}
Let $v_4$, $v_5$ be two $6$-vertices that are common neighbours of $v$
and $x$.
Let $w$ be   {the common neighbour of $v, v_1$ and $v_4$.}
\begin{eqnarray*}
&\text{Delete}(w) \ \ \ \text{Collect}(v_1, v_2, v_3, v, v_4, v_5) &\\
&\Delta |V|=7 \ \ \ \text{\bf b}=(3,3,1,0,0,0) \ \ \ \Sigma_e\le 11 \ \ \ \Delta
\Phi\ge 21 \ \ \ \Delta\tc=0 \ \ \ \DG \ge 7/6.&
\end{eqnarray*}

\medskip
\noindent{\bf Case $m=4$:}
Vertex $v$ must be only on triangular faces (note that distance
discharging into $v$
is impossible). We can easily find all four possible configurations around $v$;
they are displayed in Figure \ref{fig:7}. The neighbours of $v$
are denoted according to the figure.
\begin{figure}
\centering
\includegraphics{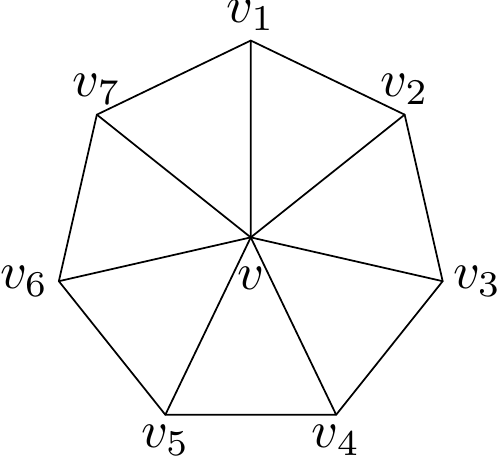}

\begin{tabular}{cccc}
\includegraphics{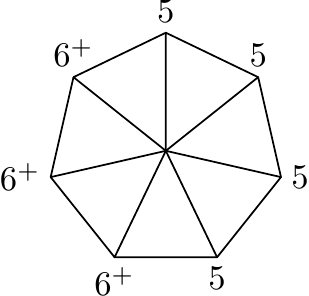} &
\includegraphics{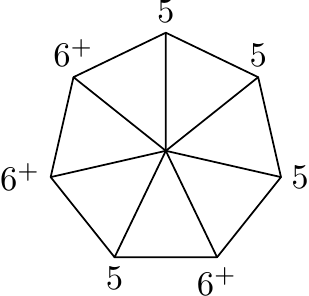} &
\includegraphics{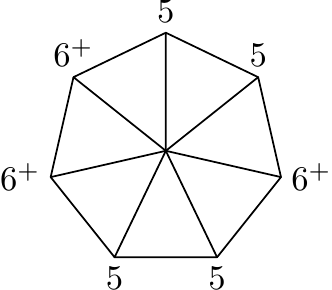} &
\includegraphics{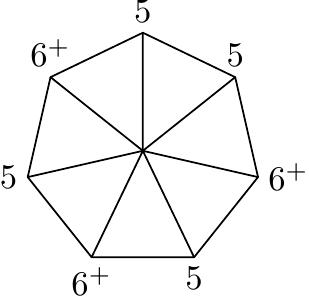}  \\
Conf. $1$ & Conf. $2$ & Conf. $3$ & Conf. $4$\\
\end{tabular}
\caption{Possible configurations around a $7$-vertex.}
\label{fig:7}
\end{figure}

\medskip\noindent\underline{\emph{Configuration 1:}}
If $v_7$ is of degree $6$, then
\begin{eqnarray*}
&\text{Delete}(v_5) \ \ \ \text{Collect}(v_4, v_3, v_2, v_1, v, v_7) &\\
&\Delta |V|=7 \ \ \ \text{\bf b}=(4,2,1,0,0,0) \ \ \ \Sigma_e\le 11 \ \ \ \Delta
\Phi\ge 21 \ \ \ \Delta\tc=0 \ \ \ \DG \ge 7/6.&
\end{eqnarray*}
If $v_7$ has another $5$-neighbour $w\neq v_1$, then
\begin{eqnarray*}
&\text{Delete}(v_7) \ \ \ \text{Collect}(w, v_1, v_2, v_3, v_4, v) &\\
&\Delta |V|=7 \ \ \ \text{\bf b}=(5,0,2,0,0,0) \ \ \ \Sigma_e\le 11 \ \ \ \Delta
\Phi\ge 21 \ \ \ \Delta\tc=0 \ \ \  \DG \ge 7/6.&
\end{eqnarray*}

{Assume $v_7$ has degree at least $7$ and has no other $5$-neighbour.}   {Then}  $\mc(v_1,v_7)\ge 4/5$.
If the edge $v_1v_7$ is on a non-triangular face $f$, the vertex $v_1$
would discharge at least $1/2$ into $f$ and then discharge completely into
$v_7$, leaving
nothing to discharge into $v$, {implying that $\ch(v) \le 0$}.  Therefore, there exist a common neighbour $w$ of
$v_1$ and $v_7$.
\begin{eqnarray*}
&\text{Delete}(w) \ \ \ \text{Collect}(v_1, v_2, v_3, v_4, v, v_7) &\\
&\Delta |V|=7 \ \ \ \text{\bf b}=(5,0,2,0,0,0) \ \ \ \Sigma_e\le 11 \ \ \ \Delta
\Phi\ge 21 \ \ \ \Delta\tc=0 \ \ \  \DG \ge 7/6.&
\end{eqnarray*}
Assume $v_7$ has degree at least $8$. Then $v_1$ completely discharges into $v_7$, leaving
nothing for $v_1$ to discharge into $v$. Consequently, $\ch(v) \le 0$, a contradiction.

\medskip\noindent\underline{\emph{Configuration 2:}}
The argument is very similar to Configuration $1$. It suffices to switch the roles
of $v_4$ and $v_5$.

\medskip\noindent\underline{\emph{Configuration 3:}}
  {If $v_7$ has degree at most $7$, then}
\begin{eqnarray*}
&\text{Delete}(v_6) \ \ \ \text{Collect}(v_5, v_4, v, v_1, v_2, v_7) &\\
&\Delta |V|=7 \ \ \ \text{\bf b}=(4,2,1,0,0,0) \ \ \ \Sigma_e\le 11 \ \ \ \Delta
\Phi\ge 21 \ \ \ \Delta\tc=0 \ \ \  \DG \ge 7/6.&
\end{eqnarray*}
{Assume $v_7$ has degree at least $8$.} If $v_7$ has another $5$-neighbour $w$ besides $v_1$, then
\begin{eqnarray*}
&\text{Delete}(v_7) \ \ \ \text{Collect}(v_1, v_2, v, v_4, v_5, w) &\\
&\Delta |V|=7 \ \ \ \text{\bf b}=(5,0,1,1,0,0) \ \ \ \Sigma_e\le 11 \ \ \ \Delta
\Phi\ge 23 \ \ \ \Delta\tc=0 \ \ \  \DG \ge 11/9.&
\end{eqnarray*}
  {If $v_7$ has no other $5$-neighbour,}  then $v_1$ discharges completely into $v_7$,
and hence $v$ does not have positive final charge in this configuration.

\medskip\noindent\underline{\emph{Configuration 4:}}
 {If $v_7$ has degree at most $7$, then}
\begin{eqnarray*}
&\text{Delete}(v_5) \ \ \ \text{Collect}(v_6, v_4, v, v_1, v_2, v_7) &\\
&\Delta |V|=7 \ \ \ \text{\bf b}=(4,2,1,0,0,0) \ \ \ \Sigma_e\le 11 \ \ \ \Delta
\Phi\ge 21 \ \ \ \Delta\tc=0 \ \ \  \DG \ge 7/6.&
\end{eqnarray*}
{Assume $v_7$ has degree at least $8$.} {If $v_5$ has degree at most $7$, then}
\begin{eqnarray*}
&\text{Delete}(v_7) \ \ \ \text{Collect}(v_1, v_2, v, v_4, v_6, v_5) &\\
&\Delta |V|=7 \ \ \ \text{\bf b}=(4,2,1,0,0,0) \ \ \ \Sigma_e\le 11 \ \ \ \Delta
\Phi\ge 21 \ \ \ \Delta\tc=0 \ \ \ \DG \ge 7/6.&
\end{eqnarray*}
Thus both $v_7$ and $v_5$ have degree at least $8$.
This implies that all the charge of $v_6$ is sent to $v_7$ and $v_5$, and $v_6$ discharges $0$ to $v$. Therefore, $v$  does
not have positive final charge in this configuration.

\medskip
\noindent{\bf Case $m=5$:}
There can be only one non-triangular face around $v$.
Simple case analysis shows that $v$ always has three consecutive neighbours,
say $x$, $y$ and $z$, such that
$x$ is not of degree $5$,
$y$ and $z$ are of degree $5$,
and edges $xy$ and $yz$ exist.
The other $5$-neighbours of $v$ will be denoted by $v_1, v_2, v_3$.
\begin{eqnarray*}
&\text{Delete}(x) \ \ \ \text{Collect}(y, z, v, v_1, v_2, v_3) &\\
&\Delta |V|=7 \ \ \ \text{\bf b}=(5,1,1,0,0,0) \ \ \ \Sigma_e\le 11 \ \ \ \Delta
\Phi\ge 19 \ \ \ \Delta\tc=0 \ \ \  \DG \ge 10/9.&
\end{eqnarray*}

\medskip
\noindent{\bf Case $m\ge 6$:}
Let us denote some six $5$-neighbours of $v$ by $v_1,
\dots , v_6$.
\begin{eqnarray*}
&\text{Delete}(v) \ \ \ \text{Collect}(v_1, v_2, v_3, v_4, v_5, v_6) &\\
&\Delta |V|=7 \ \ \ \text{\bf b}=(6,0,1,0,0,0) \ \ \ \Sigma_e\le 11 \ \ \ \Delta
\Phi\ge 17 \ \ \ \Delta\tc=0 \ \ \  \DG \ge 19/18.&
\end{eqnarray*}
\end{proof}

\section{Charges on vertices of degree $5$}

In this section we show that all $5$-vertices outside bad cuts
completely discharge and none of them has positive final charge.

\begin{lema}\label{lema5}
{No ordinary $5$-vertex has positive final charge.}
\end{lema}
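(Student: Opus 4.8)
The plan is to show that an ordinary $5$-vertex $v$ always completely discharges all its charge $1$ to its $6^+$-neighbours, leaving final charge $0$. For a contradiction, suppose $v$ is an ordinary $5$-vertex with $\ch(v)>0$. By Lemmas~\ref{lema6} and \ref{lema8} and Corollary~\ref{easytypes}, no $6^+$-neighbour of $v$ is ordinary and positive, so charge flowing into a $6^+$-neighbour causes no problem elsewhere; the issue is purely that $v$ itself fails to unload. First I would record how $v$ loses charge: in Step~1 it sends $1/2$ or $3/5$ to each incident $4^+$-face, in Step~2 it may send $1/5$ via distance discharging, and in Step~3 it sends $\min\{\mc(v,w_i),\ch_a(v)\}$ to each $6^+$-neighbour $w_i$ in decreasing order of $\mc$. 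So $\ch(v)>0$ forces the total of the Step~3 capacities $\sum_i \mc(v,w_i)$, together with whatever left in Steps~1--2, to be strictly less than $1$ --- in particular $v$ has few $6^+$-neighbours of high type, and most faces around $v$ are triangles.

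The core of the argument is a case analysis on the number $t$ of $5$-neighbours of $v$ (equivalently, $\deg$-information about the five neighbours), combined with the local face structure. If $v$ has at least one incident $4^+$-face, the Step~1 loss of $1/2$ means $v$ needs only an extra $1/2$ of capacity among its (at most four) $6^+$-neighbours; since even a type-$6b$ vertex is the only way to get $\mc=0$, and the cut-avoidance lemmas forbid the degenerate local pictures, one checks that enough capacity is always present. The harder situation is when all five faces at $v$ are triangles: then $v$ has no Step~1 loss, so I must extract charge $\ge 1$ from Step~2 plus Step~3. Here I would split according to how many of the five neighbours are $6$-vertices (which can only absorb via Step~1, hence are useless as sinks for $v$) versus how many are $6^+$-vertices of type carrying $\mc\ge$ some positive value, and use the ordering rule in Step~3 to see that $v$ completely discharges into its first one or two neighbours unless those neighbours are of the worst types ($7d$, $8e$, $9d$, $10b$, $6b$) with many competing $5$-neighbours. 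Each such residual configuration is small and explicit, and for each I would exhibit a reduction --- delete a suitable neighbour $w$ of $v$, collect $v$ and enough of its $5$-neighbours (at least six vertices total) --- presented in the concise $\text{Delete}/\text{Collect}$, $\text{\bf b}$, $\Sigma_e$, $\Delta\Phi$, $\Delta\tc$, $\DG$ format, invoking Lemma~\ref{lemacomp}, with $\DG\ge 1$ in every line. The cut-avoidance facts (no triangle-cut or chordless quadrilateral-cut through $v$) are exactly what guarantees the needed $\Sigma_e$ upper bounds and the absence of new tree components, just as in Lemma~\ref{lemaexample}.

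The main obstacle I anticipate is bookkeeping completeness: unlike the $7$-vertex case in Lemma~\ref{lema6}, the set of possible local pictures around a positive $5$-vertex is genuinely large once one tracks, for each of the five neighbours, its degree class, its type (which depends on its own non-triangular faces and its own count of $5$-neighbours), and how the triangular/non-triangular faces are arranged cyclically around $v$. I expect the proof to proceed by first using Steps~1--2 to eliminate all cases with a $4^+$-face or a distance-discharging configuration quickly, then reducing to the all-triangles case and there arguing that $\sum_i \mc(v,w_i)\ge 1$ \emph{unless} $v$'s $6^+$-neighbours are all of a short list of low-$\mc$ types with too many $5$-neighbours, and finally dispatching that short list of explicit configurations by reductions. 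The delicate step in the reductions themselves is verifying one can always collect at least six vertices in the right order (choosing $w$ so that after deleting $w$ three of $v$'s $5$-neighbours drop to degree $\le 4$, then $v$, then the rest); this is where the triangular-face structure around $v$ is used, and it is the only place a "$\ast$" clarification should be needed.
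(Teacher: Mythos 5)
Your plan matches the paper's strategy at the level of outline — the paper indeed enumerates configurations around an ordinary $5$-vertex $v$ (separating the case where $v$ lies only on triangular faces from the case with exactly one non-triangular face), and for each one either shows that $v$ completely discharges or exhibits a Delete/Collect reduction with $\DG\ge 1$ invoking Lemma~\ref{lemacomp} and the cut-avoidance lemmas. So you have correctly identified the two mechanisms and the role of cut-avoidance. However, as written this is a plan, not a proof: the entire content of the lemma is the exhaustive case analysis you postpone, and two of your high-level claims about how that analysis will go are not quite right.

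First, you assert that whenever $v$ lies on a $4^+$-face, ``one checks that enough capacity is always present.'' This is false: the paper's Configurations 12, 13 and 14 all have a non-triangular face at $v$ and all require reductions, not just a capacity count (e.g.\ Configuration 14 with $\deg(v_1)=5$ needs the reduction $\text{Delete}(w)$, $\text{Collect}(v_1,v,v_2,v_3,v_4,v_5)$). Second, the hardest case --- all five neighbours of $v$ having degree $5$ (Configuration 10 in the paper) --- cannot be dispatched by the generic ``delete a neighbour of $v$ and collect $\ge 6$ vertices'' template you describe. There one must delete a vertex $w'$ at distance two from $v$, track possible new tree components with a $K_5$-minor planarity argument, and in the extremal sub-subcase (all second-neighbours also of degree $5$) collect $11$ or $12$ vertices after deleting a vertex not adjacent to $v$ at all, bounding $\Delta\tc$ separately. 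Your proposal flags that a ``$\ast$'' clarification may be needed somewhere, which is the right instinct, but it does not identify that this distance-two/tree-component analysis is where the argument genuinely strains, nor does it contain the structural ``Claim'' (that $v_1$ has no non-denoted $5$-neighbour in most configurations) which the paper uses to control the types of $v_1$ and hence the $\mc$ values. Without an explicit enumeration of configurations and without these two ingredients, the plan cannot yet be checked for correctness.
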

\begin{proof}
For a contradiction, let $v$ be such a vertex.
The vertex $v$ cannot have too many $7^+$-neighbours. If it was so, then $v$ would
completely discharge into these higher-degree vertices no matter what type they are of.
The   {neighbours of $v$}  will be denoted in accordance with Figure~\ref{fig:5}.

Our first step is to list all possible configurations around the vertex $v$.
According to Step 1 of the discharging procedure, any $5$-vertex sends at least $1/2$ into each non-triangular face it lies in.
Consequently, $v$ is contained it at most one non-triangular face. We will distinguish two cases.

\medskip

\noindent{\bf Case 1: $v$ lies in triangular faces only.}
To guarantee that we do not miss any configuration, we list them in the following order:
\begin{enumerate}
\item Configurations are ordered according to the number of $7^+$-neighbours of $v$.
The vertex $v$ has at most two such neighbours, and if it has exactly two of them,
then at least one of them must have degree exactly $7$ because each $8^+$-vertex drains at least $1/2$ from $v$.

\item The second ordering criterion is the degree of the highest-degree neighbour of $v$.
We may assume that this vertex is $v_1$ without a loss of generality (if there are two such candidates, we choose any of them).

\item Finally, if $v$ has two $7^+$-neighbours, then
by symmetry we may assume that the second vertex is either $v_2$ or $v_3$. Configurations where $v_2$ has degree more than $6$
are earlier in the ordering.
\end{enumerate}

\noindent{\bf Case 2: $v$ lies in one non-triangular face.}
The primary criterion is the same as the first criterion of Case 1, however, $v$ has now at most one $7^+$-neighbour and this neighbour has degree $7$ because otherwise it would drain all the remaining charge from $v$ (recall that $v$ sends at least $1/2$ into the non-triangular face).

The secondary criterion for this case is that
we order the configurations according to the position of the non-triangular face. If $\deg(v_1)=7$, we may
assume without a loss of generality that this is the face containing either $v_1vv_2$, $v_2vv_3$, or $v_3vv_4$. If $v_1$ has degree at most $6$,
then we may assume that $v_1vv_2$ is in the boundary of the non-triangular face.

\medskip

Listing all the configurations in this way, we obtain $14$ possible configurations of the neighbourhood of $v$.
These configurations are displayed in Figure \ref{fig:5}.
The vertices $v$, $v_1, \dots, v_5$ are \emph{denoted}, the
other vertices of $G$ are \emph{non-denoted}.
\begin{figure}[t]
\centering
\includegraphics[scale=.8]{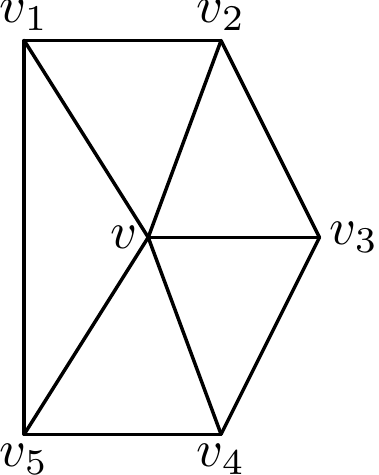}
\begin{tabular}{cccc}
\includegraphics[scale=.9]{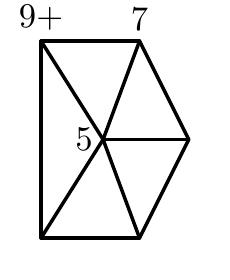}&
\includegraphics[scale=.9]{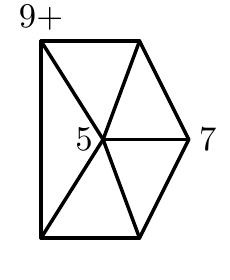}&
\includegraphics[scale=.9]{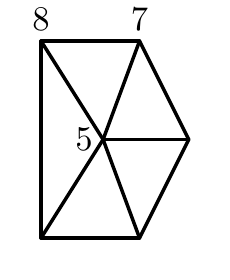}&
\includegraphics[scale=.9]{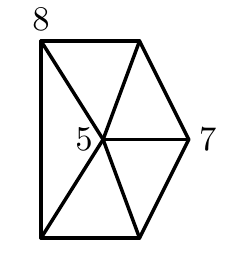} \\
Conf. $1$ & Conf. $2$ & Conf. $3$ & Conf. $4$\\
\includegraphics[scale=.9]{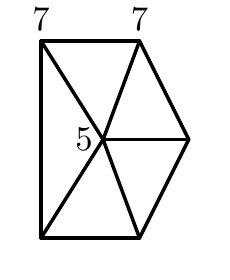}&
\includegraphics[scale=.9]{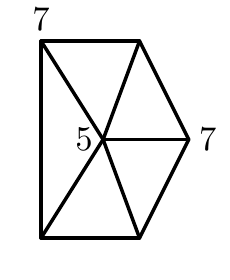}&
\includegraphics[scale=.9]{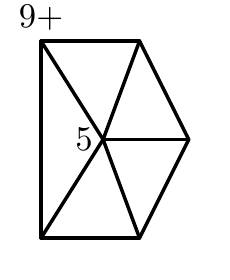}&
\includegraphics[scale=.9]{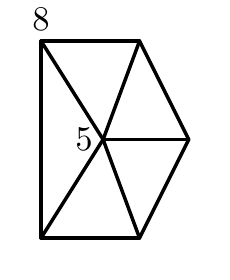} \\
Conf. $5$ & Conf. $6$ & Conf. $7$ & Conf. $8$\\
\includegraphics[scale=.9]{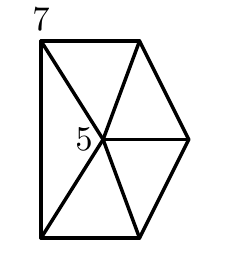}&
\includegraphics[scale=.9]{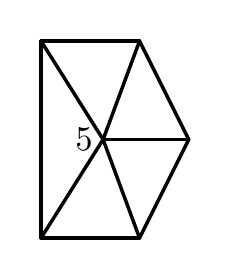}&
\includegraphics[scale=.9]{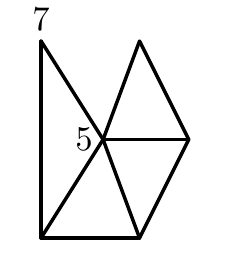}&
\includegraphics[scale=.9]{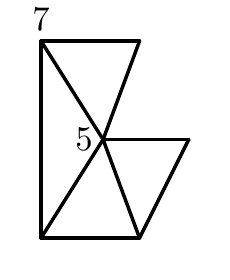}\\
Conf. $9$ & Conf. $10$ & Conf. $11$ & Conf. $12$\\
\includegraphics[scale=.9]{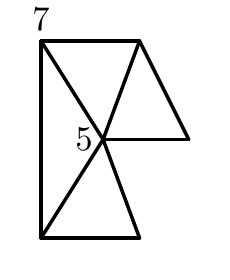}&
\includegraphics[scale=.9]{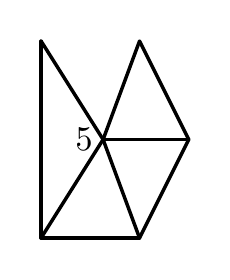}& & \\
Conf. $13$ & Conf. $14$ & &
\end{tabular}
\caption{Possible configurations around a $5$-vertex.}
\label{fig:5}
\end{figure}

\begin{claim}
\label{claim}
For all configurations except 10 and 14, the vertex $v_1$ has no non-denoted $5$-neighbour.
\end{claim}
\begin{proof} Suppose $v_1$ has a non-denoted $5$-neighbour $w$.
For all configurations except 10 and 14, we can do the following:
\begin{eqnarray*}
&\text{Delete}(v_1) \ \ \ \text{Collect}(v, v_2, v_3, v_4, v_5, w) &\\
&\Delta |V|=7 \ \ \ \text{\bf b}=(6,0,1,0,0,0) \ \ \ \Sigma_e\le 12 \ \ \ \Delta
\Phi\ge 15 \ \ \ \Delta\tc=0 \ \ \  \DG \ge 1.&
\end{eqnarray*}
\end{proof}

\medskip\noindent\underline{\emph{Configurations 1, 2, 7:}}
 {By} Claim~\ref{claim}, the vertex $v_1$ has at most three $5$-neighbours.
In Configuration 1, the type of $v_1$ is 10a, 9a, or 9b. Therefore, $v$ completely discharges into $v_1$, hence it cannot have positive final charge.
In Configurations 2 and 7, the type of $v_1$ can be 10a, 9a, 9b, or 9c. In all cases $v$ completely discharges into $v_1$.

\medskip\noindent\underline{\emph{Configuration 3:}}
  {By} Claim~\ref{claim}, the vertex $v_1$ can be of type 8a, 8b, 8c or 8d. In the first three cases, $v$ discharges completely into $v_1$.
If $v_1$ is of type 8d, $v$ discharges $9/10$ into $v_1$ and can send at least $1/3$ into $v_2$, hence $v$ cannot have positive charge.

\medskip\noindent\underline{\emph{Configuration 4:}}
If $v_2$ or $v_4$ has degree $6$, the argument used in Configuration 3 applies. Otherwise, both $v_2$ and $v_4$ are $5$-vertices.

If $v_1$ and $v_2$ have a common neighbour $w$ other than $v$, we do the following:
\begin{eqnarray*}
&\text{Delete}(w) \ \ \ \text{Collect}(v_2, v, v_5, v_1, v_4, v_3) &\\
&\Delta |V|=7 \ \ \ \text{\bf b}=(5,0,1,1,0,0) \ \ \ \Sigma_e\le 12 \ \ \ \Delta
\Phi\ge 21 \ \ \ \Delta\tc=0 \ \ \ \DG \ge 7/6.&
\end{eqnarray*}
A very similar reduction can be used if $v_1$ and $v_5$ have a common neighbour other than $v$.

If neither $v_1$ and $v_2$ nor $v_1$ and $v_5$ have a common neighbour different from $v$,
then $v_1$ lies in at least two non-triangular faces, thus it is of type 8c.
Consequently, $v$ completely discharges into $v_1$.

\medskip\noindent\underline{\emph{Configuration 8:}}
The vertex $v_1$ is of degree $8$.   {By} Claim~\ref{claim}, it has at most three $5$-neighbours.
If $v_1$ lies on at least two non-triangular faces, it is of type 8a, 8b or 8c, so $v$ discharges completely into
$v_1$.

Assume that $v_1$ lies in exactly one non-triangular face. If either $v_2$ or $v_5$ has degree $6$, then $v_1$ is of type 8a or 8b, and so $v$ completely discharges into $v_1$. We are left with the case where $\deg(v_2)=\deg(v_5)=5$.
Either a common non-denoted neighbour of $v_1$ and $v_2$ exists, or
a common non-denoted neighbour of $v_1$ and $v_5$ exists.
Say $w\neq v$ is a common non-denoted neighbour of $v_1$ and $v_2$.
\begin{eqnarray*}
&\text{Delete}(w) \ \ \ \text{Collect}(v_2, v, v_3, v_4, v_5, v_1) &\\
&\Delta |V|=7 \ \ \ \text{\bf b}=(6,0,0,1,0,0) \ \ \ \Sigma_e\le 12 \ \ \ \Delta
\Phi\ge 17 \ \ \ \Delta\tc=0 \ \ \  \DG \ge 19/18.&
\end{eqnarray*}

If $v_1$ is in no non-triangular face, then at least one of the vertices $v_2$
and $v_5$ is of degree $5$ (otherwise $v$ discharges completely into $v_1$ which would be of type 8a), say it is $v_2$.
Let $w$ be the common non-denoted neighbour of $v_2$ and $v_1$. Then
\begin{eqnarray*}
&\text{Delete}(w) \ \ \ \text{Collect}(v_2, v, v_3, v_4, v_5, v_1) &\\
&\Delta |V|=7 \ \ \ \text{\bf b}=(6,0,0,1,0,0) \ \ \ \Sigma_e\le 12 \ \ \ \Delta
\Phi\ge 17 \ \ \ \Delta\tc=0 \ \ \ \DG \ge 19/18.&
\end{eqnarray*}

\medskip\noindent\underline{\emph{Configuration 5:}}
  {By} Claim~\ref{claim}, $v_1$ cannot have a non-denoted $5$-neighbour.
Symmetrically, $v_2$ has no non-denoted $5$-neighbour.
The vertex $v$ would completely discharge into $v_1$ and $v_2$ unless $\deg(v_3)=5$ and $\deg(v_5)=5$.
Moreover, there are only triangular faces around $v_1$ and $v_2$, for otherwise one of $v_1$, $v_2$ is of type 7b and the other one 7b or 7c, so $v$
completely discharges.
Let $w\neq v$ be the common non-denoted neighbour of $v_5$ and $v_1$.
Then
\begin{eqnarray*}
&\text{Delete}(w) \ \ \ \text{Collect}(v_5, v, v_4, v_3, v_2, v_1) &\\
&\Delta |V|=7 \ \ \ \text{\bf b}=(5,0,2,0,0,0) \ \ \ \Sigma_e\le 12 \ \ \ \Delta
\Phi\ge 19 \ \ \ \Delta\tc=0 \ \ \ \DG \ge 10/9.&
\end{eqnarray*}

\medskip\noindent\underline{\emph{Configuration 6:}}
  {By} Claim~\ref{claim}, $v_1$ and $v_3$ have no non-denoted $5$-neighbours.
If $\deg(v_2)=5$ and $\deg(v_5)=5$, then let $w$ be a non-denoted neighbour of
$v_2$.
\begin{eqnarray*}
&\text{Delete}(w) \ \ \ \text{Collect}(v_2, v, v_5, v_4, v_3, v_1) &\\
&\Delta |V|=7 \ \ \ \text{\bf b}=(5,0,2,0,0,0) \ \ \ \Sigma_e\le 12 \ \ \ \Delta
\Phi\ge 19 \ \ \ \Delta\tc=0 \ \ \ \DG \ge 10/9.&
\end{eqnarray*}
The same approach works if $\deg(v_2)=5$ and $\deg(v_4)=5$.
In all the remaining cases, both of $v_1$ and $v_3$ have at most two $5$-neighbours.
This means that $v_1$ and $v_3$ are only in triangular faces, otherwise $v$
completely discharges into one
of these vertices. Moreover, either $v_2$ or $v_5$ is of degree $5$ (otherwise
$v$ discharges), say it is $v_2$.
Let $w\neq v$ be the common non-denoted neighbour of $v_2$ and $v_1$.
\begin{eqnarray*}
&\text{Delete}(w) \ \ \ \text{Collect}(v_2, v, v_1, v_5, v_4, v_3) &\\
&\Delta |V|=7 \ \ \ \text{\bf b}=(4,1,2,0,0,0) \ \ \ \Sigma_e\le 12 \ \ \ \Delta
\Phi\ge 21 \ \ \ \Delta\tc=0 \ \ \ \DG \ge 7/6.&
\end{eqnarray*}

\medskip\noindent\underline{\emph{Configuration 9:}}
If one of the vertices $v_2, \dots, v_5$ is of degree $5$, say it is $v_2$,
then let $w$ be a non-denoted neighbour of $v_2$.
\begin{eqnarray*}
&\text{Delete}(w) \ \ \ \text{Collect}(v_2, v, v_3, v_4, v_5, v_1) &\\
&\Delta |V|=7 \ \ \ \text{\bf b}=(6,0,1,0,0,0) \ \ \ \Sigma_e\le 12 \ \ \ \Delta
\Phi\ge 15 \ \ \ \Delta\tc=0 \ \ \ \DG \ge 1.&
\end{eqnarray*}
Therefore $v_2, \dots v_5$ are of degree $6$.
If some neighbour $w$ of a denoted vertex had degree $5$, we can delete $v_1$
and collect the remaining denoted vertices together with $w$ with the same calculation.
Otherwise, we may assume that no denoted vertex has a $5$-neighbour different from $v$.

If some denoted vertex $u$ is in a non-triangular face,
then $v$ discharges into $v_1$ and $u$ even though $u$ has degree $6$.
Let $w$ be the common non-denoted neighbour of $v_3$ and $v_4$.
If $\deg(w)\ge 7$, then $v$ discharges $1/5$ into $w$ via distance discharging and $4/5$ into $v_1$, thus it cannot have positive charge.
If $\deg(w)\le 6$, then
\begin{eqnarray*}
&\text{Delete}(v_3) \ \ \ \text{Collect}(v, v_4, v_2, v_5, v_1, w) &\\
&\Delta |V|=7 \ \ \ \text{\bf b}=(2,4,1,0,0,0) \ \ \ \Sigma_e\le 12 \ \ \ \Delta
\Phi\ge 23 \ \ \ \Delta\tc=0 \ \ \ \DG \ge 11/9.&
\end{eqnarray*}

\medskip\noindent\underline{\emph{Configuration 10:}}
We split the argument according to the number $m$ of denoted $5$-vertices.

\noindent{\bf Case 1: $m=6$.}
Assume that a denoted vertex, say $v_1$, has a $7^+$-neighbour $w$.
\begin{eqnarray*}
&\text{Delete}(w) \ \ \ \text{Collect}(v_1, v, v_2, v_3, v_4, v_5) &\\
&\Delta |V|=7 \ \ \ \text{\bf b}=(6,0,1,0,0,0) \ \ \ \Sigma_e\le 12 \ \ \ \Delta
\Phi\ge 15 \ \ \ \Delta\tc=0 \ \ \ \DG \ge 1.&
\end{eqnarray*}

Assume that a denoted vertex, say $v_1$, has a $6$-neighbour $w$ which is adjacent to no other denoted vertex.
\begin{eqnarray*}
&\text{Delete}(w) \ \ \ \text{Collect}(v_1, v, v_2, v_3, v_4, v_5) &\\
&\Delta |V|=7 \ \ \ \text{\bf b}=(6,1,0,0,0,0) \ \ \ \Sigma_e\le 11 \ \ \ \Delta
\Phi\ge 15 \ \ \ \Delta\tc=0 \ \ \ \DG \ge 1.&
\end{eqnarray*}

From now on, we assume that all neighbours of denoted vertices have degree at most $6$.
  {Assume}  that there exists a neighbour $w$ of a denoted vertex such that $\deg(w) = 6$. The vertex $w$ is adjacent to at least two denoted vertices. Moreover, there are no bad cuts containing $v$, and so $w$ is adjacent to exactly two denoted vertices which are also adjacent, say, $v_1$ and $v_2$.
Let $w'$ we a non-denoted neighbour of $v_4$ (it must be different from $w$ because $w$ is not adjacent to $v_4$).
If $w$ and $w'$ are adjacent, then
\begin{eqnarray*}
&\text{Delete}(w') \ \ \ \text{Collect}(v_4, v, v_1, v_2, v_3, v_5, w) &\\
&\Delta |V|=8 \ \ \ \text{\bf b}=(7,1,0,0,0,0) \ \ \ \Sigma_e\le 15 \ \
\Delta \Phi\ge 12 \ \ \Delta\tc=0^* \ \ \DG \ge 1.&
\end{eqnarray*}
If $w$ and $w'$ are not adjacent, then
\begin{eqnarray*}
&\text{Delete}(w') \ \ \ \text{Collect}(v_4, v, v_1, v_2, v_3, v_5, w) &\\
&\Delta |V|=8 \ \ \ \text{\bf b}=(7,1,0,0,0,0) \ \ \ \Sigma_e\le 14 \ \
\Delta \Phi\ge 14 \ \ \Delta\tc\ge -1^* \ \ \DG \ge 1.&
\end{eqnarray*}
[Both vertices $w$ and $w'$ may be connected to at most two denoted vertices.
Lemma~\ref{lemacomp} does not cover the necessary analysis of created tree components.
A tree component cannot be an isolated vertex because then it would be
a neighbour of $5$ removed vertices; three of them must be denoted and this forces a bad cut containing $v$.
Therefore any newly created tree component contains at least two leaves $x_1$
and $x_2$. Both these vertices must have at least four neighbours among removed vertices,
but at most two neighbours among denoted vertices (to avoid bad cuts containing $v$),
hence both $x_1$ and $x_2$ are adjacent to both $w$ and $w'$.
Thus if a tree component was created, then there cannot be an edge between $w$ and $w'$ because there would be a $K_5$-minor in $G$ (containing vertices $v$, $w$, $w'$, $x_1$, and $x_2$).
This finishes the explanation indicated by a star for the case where $w$ and $w'$ are adjacent (we have proved that no new tree components can be created).

It remains to prove that two or more new tree components cannot be created. Assume we created two tree components $S$ and $T$. We know that both $S$ and $T$ contain two leaves connected to both $w$ and $w'$ and having at least two denoted neighbours; let those leaves be $s_1$ and $s_2$ for $S$ and $t_1$ and $t_2$ for $T$. The paths $ws_1w'$, $ws_2w'$, $wt_1w'$, $wt_2w'$ divide the plane into four regions. One of those regions contains $v$, say it is the one with boundary $ws_1w's_2$.
But then $t_1$ or $t_2$ is separated from all the denoted vertices and has no neighbours among them though it should have two due to its definition.]

We are left with the case where all neighbours of denoted vertices have
degree $5$.
This is together at least $11$ vertices: there are exactly two edges joining a denoted vertex to non-denoted vertices; on the other hand, a non-denoted vertex has at most two denoted neighbours since we are avoiding bad cuts containing $v$, and so there are at least five non-denoted vertices.
These $11$ vertices must have another neighbour
(otherwise, we would be able to collect the whole graph because any planar graph with at most $11$ vertices contains a $4$-vertex).
Deleting that neighbour and collecting the $11$ vertices itself decreases $\Gamma$ by at least $1$.
Each created tree component must have at least $5$ neighbours among removed vertices, therefore $\Delta \Phi$ is at least five times the number of newly created tree components, and so $ \frac 1{36}\Delta\Phi +\frac 1{18}\Delta\tc\ge 0$. Altogether, $\Gamma$ decreases by at least $1$.

\noindent{\bf Case 2: $m=5$.}
Let $v_1$ be the $6$-neighbour of $v$. If a denoted $5$-vertex, say $v_2$, has a non-denoted $6^+$-neighbour $w$, then
\begin{eqnarray*}
&\text{Delete}(w) \ \ \ \text{Collect}(v_1, v, v_2, v_3, v_4, v_5) &\\
&\Delta |V|=7 \ \ \ \text{\bf b}=(5,2,0,0,0,0) \ \ \ \Sigma_e\le 12 \ \ \ \Delta
\Phi\ge 15 \ \ \ \Delta\tc=0 \ \ \ \DG \ge 1.&
\end{eqnarray*}

Let us call the vertices $v_2$, $v_3$, $v_4$, $v_5$ {\it red} and their non-denoted neighbours {\it blue}.
We are left with the situation where all red and blue vertices have degree $5$. Each red vertex has two blue neighbours, so there are potentially eight blue vertices. A blue vertex can be a neighbour of two red vertices only if those red vertices are $v_2$ and $v_3$, $v_3$ and $v_4$, or $v_4$ and $v_5$. Moreover, each of the listed pairs has at most one common blue neighbour. Consequently, there are at least five blue vertices.

If there are at least six blue vertices, we can delete one of them, then subsequently collect all red vertices, then collect the remaining blue vertices, and finally collect $v$ and $v_1$. Altogether, we can remove $12$ vertices while deleting just one of them. Removing those $12$ vertices itself decreases $\Gamma$ by at least $1$.
Each created tree component must have at least $5$ neighbours among removed vertices, therefore $\Delta \Phi\ge |\Delta\tc|$. Hence $\Gamma$ decreases by at least $1$.

If there are exactly five blue vertices and there exists a non-denoted neighbour $w$ of a blue vertex which is not blue, we can delete $w$ and then subsequently collect all blue and denoted vertices, removing $12$ vertices from $G$. The argument from the previous paragraph applies in this case, too.

If the five blue vertices have no additional neighbour, then $v_1$ is a cut-vertex (due to Euler's formula, there are exactly two non-denoted neighbours of $v_1$ different from blue  vertices). Consequently, $v_1v_2v$ is a bad $C_3$-cut; a contradiction.

\noindent{\bf Case 3: $m\le 4$.}
Assume that some neighbour of $v$, say $v_1$, has degree $5$. Let $w$ be a non-denoted neighbour of $v_1$.
\begin{eqnarray*}
&\text{Delete}(w) \ \ \ \text{Collect}(v_1, v, v_2, v_3, v_4, v_5) &\\
&\Delta |V|=7 \ \ \ \text{\bf b}=(5,2,0,0,0,0) \ \ \ \Sigma_e\le 12 \ \ \ \Delta
\Phi\ge 15 \ \ \ \Delta\tc=0 \ \ \ \DG \ge 1.&
\end{eqnarray*}
Otherwise, all neighbours of $v$ have degree $6$.
Next, assume that a neighbour of $v$, say $v_1$, has a non-denoted neighbour $w$ of
degree $5$.
\begin{eqnarray*}
&\text{Delete}(v_1) \ \ \ \text{Collect}(w, v, v_2, v_3, v_4, v_5) &\\
&\Delta |V|=7 \ \ \ \text{\bf b}=(2,5,0,0,0,0) \ \ \ \Sigma_e\le 12 \ \ \ \Delta
\Phi\ge 21 \ \ \ \Delta\tc=0 \ \ \ \DG \ge 7/6.&
\end{eqnarray*}
Otherwise, all non-denoted neighbours of denoted vertices have degree at least $6$, and thus denoted vertices have at most one $5$-neighbour.

Let us look at the edge $v_1v_2$. If $v_1$ and $v_2$ are only on triangular
faces, then they have a common neighbour $w$.
The vertex $w$ has degree at least $7$, otherwise we can do the following:
\begin{eqnarray*}
&\text{Delete}(v_1) \ \ \ \text{Collect}(v, v_2, v_3, v_4, v_5, w) &\\
&\Delta |V|=7 \ \ \ \text{\bf b}=(2,5,0,0,0,0) \ \ \ \Sigma_e\le 12 \ \ \ \Delta
\Phi\ge 21 \ \ \ \Delta\tc=0 \ \ \ \DG \ge 7/6.&
\end{eqnarray*}
In that case we can discharge $1/5$ from $v$ to $w$ via distance discharging.
We repeat this argument for the edges $v_2v_3$, $v_3v_4$, $v_4v_5$, $v_5v_1$.
If all vertices $v_i$, for $i\in \{ 1, 2, 3, 4, 5\}$, are only in
triangular faces, then $v$ discharges.
If some vertex $v_i$, for $i\in \{ 1, 2, 3, 4, 5\}$, is in a non-triangular
face, then it is of type 6a and $v$ discharges $2/5$ into this vertex.
This compensates for the inability to do
distance discharging to neighbours of $v_i$. Consequently, in any case $v$ cannot have positive charge.

\medskip\noindent\underline{\emph{Configuration 11:}}
The vertex $v$ discharges into the non-triangular face and into $v_1$,
which has no more than two $5$-neighbours.

\medskip\noindent\underline{\emph{Configurations 12 and 13:}}
If both $v_2$ and $v_5$ have degree $5$, then let $w$ be a non-denoted neighbour
of $v_2$.
\begin{eqnarray*}
&\text{Delete}(w) \ \ \ \text{Collect}(v_2, v, v_5, v_3, v_4, v_1) &\\
&\Delta |V|=7 \ \ \ \text{\bf b}=(6,0,1,0,0,0) \ \ \ \Sigma_e\le 11 \ \ \ \Delta
\Phi\ge 17 \ \ \ \Delta\tc=0 \ \ \ \DG \ge 19/18.&
\end{eqnarray*}
Otherwise, at least one of the vertices $v_2$, $v_5$ has degree $6$.
If the other of them also had degree $6$, then $v_1$ would be of type 7a and $v$ would completely discharge.
Thus we are left with the case where $\{\deg(v_2), \deg(v_5)\} = \{5, 6\}$.

If $v_5$ has degree $5$, then there is a common non-denoted neighbour $w$ of $v_1$ and $v_5$ because $v_1$
cannot be in a non-triangular face (if it was, it would be of type 7b and would discharge). Thus we can do
\begin{eqnarray*}
&\text{Delete}(w) \ \ \ \text{Collect}(v_5, v, v_1, v_2, v_4, v_3) &\\
&\Delta |V|=7 \ \ \ \text{\bf b}=(6,0,1,0,0,0) \ \ \ \Sigma_e\le 11 \ \ \ \Delta
\Phi\ge 17 \ \ \ \Delta\tc=0 \ \ \ \DG \ge 19/18.&
\end{eqnarray*}

If $v_2$ has degree $5$, the situation is symmetric to the previous one for Configuration 12.
For Configuration 13, we delete the common non-denoted neighbour $w$ of $v_2$ and $v_1$ and collect the remaining denoted vertices in the order $v_2$, $v$, $v_1$, $v_5$, $v_4$, $v_3$. The calculations are the same.

\bigskip\medskip\noindent\underline{\emph{Configuration 14:}}
If $\deg(v_1)=5$, then let $w$ be a neighbour of $v_1$ that is not adjacent to
any denoted vertex (one such neighbour lies on the non-triangular face containing $v$).
\begin{eqnarray*}
&\text{Delete}(w) \ \ \ \text{Collect}(v_1, v, v_2, v_3, v_4, v_5) &\\
&\Delta |V|=7 \ \ \ \text{\bf b}=(7,0,0,0,0,0) \ \ \ \Sigma_e\le 10 \ \ \ \Delta
\Phi\ge 15 \ \ \ \Delta\tc=0 \ \ \ \DG \ge 1.&
\end{eqnarray*}
Therefore $\deg(v_1)=6$. If any other denoted vertex besides $v$ has degree $5$,
say it is $v_3$, then let $w$ be a neighbour of $v_3$.
\begin{eqnarray*}
&\text{Delete}(w) \ \ \ \text{Collect}(v_3, v, v_2, v_4, v_5, v_1) &\\
&\Delta |V|=7 \ \ \ \text{\bf b}=(6,1,0,0,0,0) \ \ \ \Sigma_e\le 11 \ \ \ \Delta
\Phi\ge 15 \ \ \ \Delta\tc=0 \ \ \ \DG \ge 1.&
\end{eqnarray*}
If any non-denoted neighbour of a denoted vertex has degree $5$, say it is
$v_1$, then let $w$ be that neighbour.
\begin{eqnarray*}
&\text{Delete}(w) \ \ \ \text{Collect}(v_1, v, v_5, v_4, v_3, v_2) &\\
&\Delta |V|=7 \ \ \ \text{\bf b}=(2,5,0,0,0,0) \ \ \ \Sigma_e\le 11 \ \ \ \Delta
\Phi\ge 23 \ \ \ \Delta\tc=0 \ \ \ \DG \ge 11/9.&
\end{eqnarray*}
Otherwise, $v_1$ and $v_2$ are of type 6a, so each of them can get $2/5$ from $v$.
Since $v_1$ sends at least $1/2$ into the non-triangular face, it cannot have positive charge.
\end{proof}

\begin{proof}[Proof of Theorem \ref{thm:main2}]
Lemma \ref{lemanoext} and Lemma \ref{lemaexample}
show that a minimal counterexample to Theorem
\ref{thm:main2} contains an ordinary vertex with positive charge.
Lemma \ref{lema8}, Lemma \ref{lema6}, and Lemma \ref{lema5} say that no such
vertex exists, which is a contradiction.
\end{proof}

Observe that in most of the cases, $\DG > 1$. If $\DG > 1$ in all cases, we could have 
defined $\Gamma(G)$ as $$\Gamma(G) = {1\over 12}|V(G)| +
\lambda\Phi(G)  + 2 \lambda\tc(G)$$ for some $\lambda < 1/36$, which would lead to an improvement of 
Theorem \ref{thm:main2}. 
The most problematic case which hinders further improvement is the case when a
$5$-vertex is surrounded
by five $5$-vertices. One of these configurations is in Figure~\ref{pconf}.

\begin{figure}
\begin{center}
\includegraphics{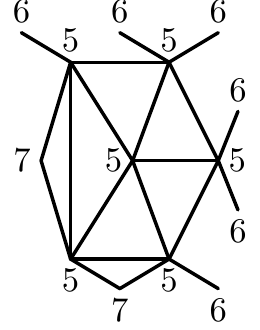}
\end{center}
\caption{A problematic configuration. No extra edges between depicted vertices
exist.}\label{pconf}
\end{figure}

\end{document}